\providecommand{\noopsort[1]{}}
\numberwithin{equation}{section}
\setlist{leftmargin=*}
\setlist[1]{labelindent=1.2\parindent}
\newtheorem{thm}{Theorem}[section]
\newtheorem{cor}[thm]{Corollary}
\newtheorem{prop}[thm]{Proposition}
\newtheorem{lem}[thm]{Lemma}
\theoremstyle{remark}
\newtheorem{rem}[thm]{Remark}
\newtheorem{hyp}[thm]{Hypothesis}
\newtheorem{example}[thm]{Example}
\theoremstyle{definition}
\newcommand{\coloneqq}{\mathrel{\mathop:}=}
\renewcommand{\Re}{{\rm Re}\,}
\newcommand{\eps}{\varepsilon}
\newcommand{\one}{\mathds{1}}
\newcommand{\weak}{\rightharpoonup}
\newcommand{\R}{\mathds{R}}
\newcommand{\N}{\mathds{N}}
\newcommand{\bbN}{\mathbb{N}}
\newcommand{\calF}{\mathcal{F}}
\DeclarePairedDelimiter{\norm}{\lVert}{\rVert}
\DeclarePairedDelimiter{\abs}{\lvert}{\rvert}
\DeclarePairedDelimiter{\dual}{\langle}{\rangle}
\DeclarePairedDelimiter{\set}{\lbrace}{\rbrace}
\newcommand{\cB}{\mathscr{B}}
\newcommand{\cM}{\mathscr{M}}
\newcommand{\cD}{\mathscr{D}}
\newcommand{\cL}{\mathscr{L}}
\newcommand{\cA}{\mathscr{A}}
\newcommand{\cF}{\mathscr{F}}
\newcommand{\cE}{\mathscr{E}}
\newcommand{\ue}{\mathrm{e}}
\newcommand{\ud}{\mathrm{d}}
\DeclareMathOperator{\re}{Re}
\begin{document}

\title{A monotone convergence theorem for strong Feller semigroups}

\author{Christian Budde}
\address[C. Budde]{University of the Free State, Department of Mathematics and Applied Mathematics, P.\ O.\ Box 339, 9300 Bloemfontein, South Africa}
\email{BuddeCJ@ufs.ac.za}

\author{Alexander Dobrick}
\address[A. Dobrick]{Christian-Abrechts-Universit\"at zu Kiel, Arbeitsbereich Analysis, Heinrich-Hecht-Platz 6, 24118 Kiel, Germany}
\email{dobrick@math.uni-kiel.de}

\author{Jochen Gl\"uck}
\address[J. Gl\"uck]{Bergische Universit\"at Wuppertal, Fakult\"at f\"ur Mathematik und Naturwissenschaften, Gaußstr.\ 20, 42119 Wuppertal, Germany}
\email{glueck@uni-wuppertal.de}

\author{Markus Kunze}
\address[M. Kunze]{Universit\"at Konstanz, Fachbereich Mathematik und Statistik, Fach 193, 78357 Konstanz, Germany}
\email{markus.kunze@uni-konstanz.de}

\subjclass[2020]{47D07; 47B65; 35A35; 60J35}
\keywords{strong Feller property; monotone convergence; parabolic PDEs with unbounded coefficients; transition semigroup; non-strongly continuous semigroups}

\begin{abstract}
	For an increasing sequence $(T_n)$ of one-parameter semigroups of sub Markovian kernel operators over a Polish space, we study the limit semigroup and prove sufficient conditions for it to be strongly Feller. In particular, we show that the strong Feller property carries over from the approximating semigroups to the limit semigroup if the resolvent of the latter maps $\one$ to a continuous function. 
	
	This is instrumental in the study of elliptic operators on $\R^d$ with unbounded coefficients: our abstract result enables us to assign a semigroup to such an operator and to show that the semigroup is strongly Feller under very mild regularity assumptions on the coefficients.
	
	We also provide counterexamples to demonstrate that the assumptions in our main result are close to optimal.
\end{abstract}

\maketitle

\section{Introduction}

\subsection*{Convergence of semigroups}

A central aspect in the theory of strongly continuous semigroups is that of \emph{convergence} of a sequence of semigroups to a limit semigroup. Indeed, even the Hille--Yosida Theorem (see \cite[Thms. II.3.5 and II.3.8]{en}), which provides the foundation for the entire theory, is usually proved by an approximation argument (here the famous \emph{Yosida approximants} appear). More generally, this topic is the content of the Trotter--Kato Theorem (see \cite[Thms.\ III.4.8 and III.4.9]{en}) that characterizes strong convergence of semigroups by strong convergence of the resolvents of their generators.

The situation where one replaces strong convergence with weaker notions of convergence has also received some attention in the literature. Most results concern replacing norm convergence by weak convergence in the setting of Hilbert spaces, i.e.,\ studying convergence of operators in the weak operator topology. Possibly the earliest result is due to Simon \cite{simon}, who proved strong convergence of semigroups given weak convergence of the resolvents, provided that the latter are \emph{monotone} in the sense of quadratic forms. While some extensions of Simon's result beyond the setting of monotone sequences of self-adjoint generators are known (see \cite{bte14, o95}), examples show that we cannot have a full generalization of the Trotter--Kato Theorem to this setting (\cite{es10} and \cite[Ex.\ 3.5]{cte18} based on \cite{kr09}). However, Chill and terElst \cite{cte18} could prove a Trotter--Kato Theorem for positive self-adjoint operators without imposing monotonicity assumptions.\medskip

\subsection*{Theoretical contributions}

In this article, we leave the Hilbert space setting and instead focus on semigroups on the space $B_b(E)$ of bounded and measurable functions on a Polish space $E$. 
The theory of strongly continuous semigroups is not appropriate for this setting; instead, we consider semigroups which satisfy merely a measurability assumption with respect to time, and whose operators are given by transition kernels.
These are natural assumptions for transition semigroups of Markov processes; see for instance \cite{ek} for details. 

On $B_b(E)$ we will study \emph{pointwise} convergence of a sequence $(T_n)$ of semigroup to a limit semigroup $T$; 
our main result, Theorem~\ref{t.strongfeller}, gives sufficient conditions for $T$ to have the so-called strong Feller property, which means that the semigroup operators map $B_b(E)$ into the space $C_b(\Omega)$ of bounded and continuous functions.
The sequence $(T_n)$ (or, equivalently, the sequence of the Laplace transforms of the $T_n$) will be assumed to be monotone 
-- \emph{not}  in the sense of quadratic forms, considered in the above-mentioned Hilbert space setting, but pointwise.

\subsection*{An application}

The aforementioned monotonicity property is, for instance, 
satisfied in the construction of semigroups generated by elliptic operators with unbounded drift and diffusion coefficients. 
In this case the monotonicity is a consequence of the maximum principle. 
This construction was first carried out by Metafune, Pallara and Wacker \cite{mpw02} (see also
\cite{lorenzi2017} for more examples),  
and in fact, the article \cite{mpw02} was a major motivation for the present work. 
The authors of the latter article first establish monotone convergence of the resolvents of certain approximating operators. 
In a second step, they also prove (monotone) convergence of these semigroups. 
However, this second step is carried out using interior Schauder estimates, 
which requires more restrictive regularity assumptions than are necessary for the first step 
(and as a consequence, the first step is actually not used for the second step). 
With our abstract results, in particular Theorems \ref{t.limit-semigroup} and \ref{t.strongfeller}, 
convergence of the semigroups and the strong Feller property of the limit semigroup 
follow immediately from the results about the resolvent in \cite{mpw02}, 
without additional regularity assumptions on the coefficients. 
Therefore, our approach yields convergence and the strong Feller property under considerably weaker assumptions than in \cite{mpw02}.

\subsection*{Related literature}

The idea to consider sequences of semigroups or resolvents which are pointwise (or almost everywhere) increasing as a means to construct a limit semigroup is  very classical. It was already employed by Kato \cite{k54} to prove  a generation result for semigroups on the space $\ell^1$. 
This argument was later adapted to much more general settings; see in particular \cite[Sect.\ 13]{bobrowski2016}, \cite{bl17}, \cite[Thm.\ 2.1]{tyran2020} 
and \cite[Sect.\ 6]{b21}.
These articles are set in the realm of $C_0$-semigroups and the choice of the underlying space ensures that a norm bounded increasing sequence of operators is always strongly convergent. 
In contrast, our setting is more subtle and requires a careful handling of the involved modes of convergence. 
As a consequence, time regularity of the limit semigroup is far from clear in our case; see Theorem~\ref{t.strongfeller} and Example~\ref{ex.loose-strong-feller}.

\subsection*{Organization of the article}

In Section~\ref{sec.semigroups-of-kernel-operators}, we discuss preliminaries about semigroups of kernel operators and prove a continuity result for strong Feller semigroups. We take up our main line of study in Section~\ref{sec.a-monotone-convergence-theorem}, where we analyse monotone convergence of semigroups on the space $B_b(E)$ of bounded and measurable functions. 
In Section~\ref{sec.strong-feller} we prove that the limit semigroup has the strong Feller property if (A) every semigroup $T_n$ has the strong Feller property and (B) the (pseudo-)resolvent of the limit semigroup satisfies the strong Feller property. 
In Section~\ref{sec.counterexamples} we provide counterexamples that show that neither condition~(A) nor~(B) can be omitted. 
In the concluding Section~\ref{sec.application} we apply our results to construct a semigroup generated by the aforementioned elliptic operator with unbounded coefficients in the setting of \cite{mpw02}.

\section{Preliminaries on Semigroups of kernel operators}
\label{sec.semigroups-of-kernel-operators}

Throughout the article, $E$ denotes a Polish space which we always endow with its Borel $\sigma$-algebra $\cB(E)$. We denote the space of bounded and Borel measurable functions on $E$ by $B_b(E)$, and its subspace of bounded and continuous functions by $C_b(E)$.
The space of finite (signed) measures on $E$ is denoted by $\cM_b (E)$ and $\sigma \coloneqq \sigma(B_b(E), \mathscr{M}(E))$ refers to the weak topology on $B_b(E)$ induced by $\mathscr{M}(E)$. By slight abuse of notation, we will denote the restriction of this topology to $C_b(E)$ also by $\sigma$. 
We write $\weak$ to indicate convergence with respect to $\sigma$, whereas $\to$ is used to indicate convergence with respect to the supremum norm $\|\cdot\|_\infty$. Note that $f_n \weak f$ if and only if $\sup_{n\in \N}\norm{f_n}<\infty$ and $f_n(x) \to f(x)$ for all $x\in E$.\smallskip

A \emph{kernel} on $E$ is a map $k \colon E\times \cB(E)\to \R$ such that
\begin{enumerate}
	[(i)]
	\item the map $x\mapsto k(x, A)$ is measurable for all $A\in \cB (E)$;
	\item $k(x, \cdot)$ is a signed measure for every $x\in E$;
	\item $\sup_{x\in E} |k|(x,E) <\infty$, where $|k|(x, \cdot)$ denotes the total variation of the measure $k(x, \cdot)$.
\end{enumerate}
If every measure $k(x, \cdot)$ is positive (a sub probability measure), then the kernel is called \emph{positive} (\emph{sub Markovian}).

To each kernel $k$ one associates a bounded linear operator $T$ on $B_b(E)$ defined by
\begin{equation}
	\label{eq.kernelop}
	\big[Tf\big](x) \coloneqq \int_E f(y) k(x, \ud y).
\end{equation}
An operator $T$ of this form is called a \emph{kernel operator}. 
The operator $T$ is positive if and only if $k(x,A) \ge 0$ for all $x \in E$ and all $A \in \cB(E)$;
in this case, we write $T\geq 0$ and one has $\norm{T} = \sup_{x \in E} k(x,E)$.

One can prove that a bounded linear operator on $B_b(E)$ is a kernel operator if and only if it is continuous with respect to the weak topology $\sigma$ (see e.g.\ \cite[Prop.\ 3.5]{k11}). Alternatively, a bounded linear operator $T$ on $B_b(E)$ is a kernel operator if and only if its norm adjoint $T^*$ leaves the space $\cM_b(E)$ invariant (see \cite[Prop.\ 3.1]{k11}); we write
$T' \coloneqq T^*|_{\cM_b(E)}$. By $\cL(B_b(E), \sigma)$ we denote the space of $\sigma$-continuous operators on $B_b(E)$, i.e.,\ kernel operators.
\smallskip

We now turn our attention to semigroups. A \emph{semigroup of kernel operators} is a family of operators $T = (T(t))_{t>0}$ in $\cL(B_b(E), \sigma)$ such that
\begin{enumerate}
	[(i)]
	\item one has $T(t+s) = T(t)T(s)$ for all $t,s>0$;
	\item the map $(t,x)\mapsto (T(t)f)(x)$ is measurable.
\end{enumerate}
Moreover, a semigroup of kernel of operator $(T(t))_{t > 0}$ is called \emph{bounded} if there exists a number $M \ge 0$ such that $\norm{T(t)} \le M$ for all $t > 0$; it is called \emph{positive} if $T(t)\geq 0$ for all $t>0$.
We will mainly deal with bounded semigroups throughout the article. Semigroups which merely satisfy the weaker assumption $\sup_{t \in (0,1]} \norm{T(t)} < \infty$ (and are thus only exponentially bounded) can be treated by a rescaling argument.

If $T = (T(t))_{t > 0}$ is a bounded semigroup of kernel operators, then it is not difficult to see that for every $\Re\lambda >0$ there is an operator $R(\lambda) \in \cL (B_b(E), \sigma)$ such that
\[
	\langle R(\lambda)f, \mu\rangle = \int_0^\infty e^{-\lambda t} \langle T(t)f, \mu\rangle\, \ud t.
\]
In the terms of \cite[Def.\ 5.1]{k11}, this means that $T$ is an \emph{integrable semigroup} on the norming dual pair $(B_b(E), \cM_b(E))$. 
Clearly, if the semigroup $T$ is positive, then the operator $R(\lambda)$ is positive for every real number $\lambda > 0$.
By \cite[Prop.\ 5.2]{k11}, the family $(R(\lambda))_{\Re\lambda>0}$ is a \emph{pseudo resolvent}, i.e.,\ it satisfies the \emph{resolvent identity} 
\begin{align*}
	R(\lambda) - R(\mu) = (\mu-\lambda)R(\lambda)R(\mu) \quad (\Re\lambda,\, \Re\mu >0).
\end{align*}
Therefore, we call the family $(R(\lambda))_{\Re\lambda>0}$ the pseudo-resolvent of the semigroup $T$; 
alternatively, we also sometimes call it the \emph{Laplace transform} of $T$.
In general, the operators $R(\lambda)$ are not injective and thus do not form the resolvent of a (single-valued) operator. However, there is a multi-valued operator $A$ such that $R(\lambda) = (\lambda - A)^{-1}$ for all $\Re\lambda >0$ (see \cite[Appendix~A]{haase}). This operator $A$ is sometimes called the \emph{full generator} of $T$ and it is characterized by $(f,g) \in A$ if and only if
\[
	T(t)f - f = \int_0^t T(s)g\, \ud s
\]
for all $t>0$, see \cite[Proposition~5.7]{k11}. Note that this terminology is consistent with that used by Ethier--Kurtz \cite[Section~1.5]{ek}.\smallskip

Let $(T(t))_{t > 0}$ be a bounded and positive semigroup of kernel operators. 
There are a number of additional properties that such a semigroup can have and that are of particular interest for us.
We call $(T(t))_{t > 0}$ a
\emph{$C_b$-Feller semigroup} if its satisfies $T(t)C_b(E) \subseteq C_b(E)$ for each $t>0$ and, in addition, the restriction of $T$ to $C_b(E)$ is \emph{stochastically continuous}, meaning that $T(t)f \weak f$ as $t\to 0$ for every $f\in C_b(E)$. It is easy to see that if $(T(t))_{t > 0}$ leaves $C_b(E)$ invariant, then also its Laplace transform $R(\lambda)$ leaves $C_b(E)$ invariant; Example \ref{ex.cbnotgood} shows that the converse is not true in general. If $(T(t))_{t > 0}$ is a $C_b$-Feller semigroup, then $R(\lambda)|_{C_b(E)}$ is injective (see \cite[Thm.\ 2.10]{k09}) and hence the resolvent of a unique operator $A_{C_b}$. An easy computation shows that $A_{C_b}$ is exactly the part of the full generator $A$ in $C_b(E)$, i.e.,\ $u\in D(A_{C_b})$ and $A_{C_b}u=f$ if and only if
$(u,f) \in A\cap (C_b(E)\times C_b(E))$. By \cite[Thm.\ 2.10]{k09}, we can alternatively characterize $A_{C_b}$ as $\sigma$-derivative of the map $t\mapsto T(t)f$ in 0 (where we set $T(0) = I$).\smallskip

We say that the bounded and positive semigroup $(T(t))_{t > 0}$ of kernel operators enjoys the \emph{strong Feller property} if $T(t)B_b(E) \subseteq C_b(E)$ for all $t>0$. Naturally, if $(T(t))_{t > 0}$ enjoys the strong Feller property, then $T(t)C_b(E) \subseteq C_b(E)$. However, it may happen that restriction of $(T(t))_{t > 0}$ to $C_b(E)$ is not stochastically continuous. This is the case, for example, in connection with certain non-local boundary conditions \cite{akk16, k20}. In these examples, we still have that $R(\lambda)|_{C_b(E)}$ is injective. As it turns out, this already implies certain continuity properties of the semigroup.

\begin{lem}\label{l.sfcontinuous}
	Let $(T(t))_{t>0}$ be a bounded and positive semigroup of kernel operators that enjoys the strong Feller property 
	and assume that the operators $R(\lambda)$ of its pseudo-resolvent $(R(\lambda))_{\Re\lambda >0}$ are injective when restricted to $C_b(E)$. 
	Then for every $f\in B_b(E)$ the map $(t,x) \mapsto T(t)f(x)$ is continuous on $(0,\infty)\times \Omega$.
\end{lem}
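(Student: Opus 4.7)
The plan uses three moves: (a) smoothing $f$ into a continuous function via the strong Feller property, (b) deriving an explicit formula for $R(\lambda)T(u)g$ from the Laplace transform, and (c) transferring joint continuity from $R(\lambda)T(u)g$ to $T(u)g$ by means of the injectivity hypothesis.

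First, fix $(t_0, x_0) \in (0, \infty) \times E$ and set $s := t_0/2$. Strong Feller yields $g := T(s)f \in C_b(E)$, and the semigroup law rewrites $T(t)f = T(t-s)g$ for $t$ in a neighborhood of $t_0$. Thus it suffices to prove joint continuity of $(u, x) \mapsto T(u)g(x)$ on $(0, \infty) \times E$ for every $g \in C_b(E)$.

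Next, a direct Fubini-and-change-of-variables computation in the defining Laplace transform of the pseudo-resolvent yields the identity
\[
R(\lambda) T(u) g \;=\; e^{\lambda u} R(\lambda) g \;-\; e^{\lambda u} \int_0^u e^{-\lambda v} T(v) g \, \ud v,
\]
valid for all $u \ge 0$, $\lambda > 0$, and $g \in B_b(E)$. The strong Feller property transfers to the resolvent via dominated convergence, so $R(\lambda)g \in C_b(E)$; together with strong Feller of each $T(v)$ and the uniform domination $\|T(v)g\|_\infty \le M\|g\|_\infty$, another DCT argument shows the right-hand side is jointly continuous in $(u, x) \in [0, \infty) \times E$. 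In fact $u \mapsto R(\lambda) T(u) g$ is continuous even in supremum norm.

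Finally, to pass from $R(\lambda)T(u)g$ back to $T(u)g$: given $(u_n, x_n) \to (u_0, x_0)$, the $B_b$-bounded sequence $T(u_n)g$ admits pointwise cluster points; if $h \in B_b(E)$ is such a cluster point, then the $\sigma$-continuity of $R(\lambda)$ combined with the previous step gives $R(\lambda)h = R(\lambda)T(u_0)g$. Applying $T(\eps)$ for each $\eps > 0$ and invoking injectivity of $R(\lambda)|_{C_b(E)}$ then forces $T(\eps)h = T(u_0 + \eps)g$, which identifies $h$ with $T(u_0)g$ after a limiting argument, and thereby yields pointwise time-continuity $T(u_n)g(y) \to T(u_0)g(y)$ for every $y \in E$. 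The main obstacle I anticipate is the subsequent passage from pointwise (in $x$) convergence to the joint convergence $T(u_n)g(x_n) \to T(u_0)g(x_0)$; this requires some form of equicontinuity of the family $\{T(u_n)g\}$ at $x_0$, which I plan to extract from the supremum-norm continuity established in Step 2, after a further smoothing $T(u_n)g = T(u_n - \eps)T(\eps)g$ with $\eps$ small but fixed, leveraging the uniform $C_b$-structure provided by the integral formula rather than any pointwise regularity of individual transition kernels.
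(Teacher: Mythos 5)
Your identification mechanism (pass limits of $T(u_n)g$ through the $\sigma$-continuous operator $R(\lambda)$ and use injectivity of $R(\lambda)|_{C_b(E)}$) is the same one the paper uses, and your integral identity for $R(\lambda)T(u)g$ is correct. However, there are two genuine gaps, and both occur exactly where the paper invokes a tool you never use: the \emph{ultra Feller} property. Since $T(t)=T(t/2)T(t/2)$ is a product of two strong Feller operators, each $T(t)$ maps norm-bounded sequences in $B_b(E)$ to sequences having subsequences that converge uniformly on compact subsets of $E$ to a \emph{continuous} limit. The first gap is your claim that ``the $B_b$-bounded sequence $T(u_n)g$ admits pointwise cluster points'': a norm-bounded sequence in $B_b(E)$ need not have any subsequence converging pointwise on all of $E$ (think of $\sin(nx)$ on $[0,1]$), and your argument requires an actual function $h$ with $T(u_{n_k})g\weak h$ in order to commute the limit past $R(\lambda)$. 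Moreover you need $h\in C_b(E)$, since injectivity is only assumed on $C_b(E)$; your substitute --- applying $T(\eps)$ and then ``a limiting argument'' --- does not close, because the semigroup is not assumed continuous at $t=0$ (the paper stresses that strong Feller semigroups need not be stochastically continuous), so $T(\eps)h=T(\eps)T(u_0)g$ for all $\eps>0$ does not force $h=T(u_0)g$. The ultra Feller property supplies precisely the missing compactness: after smoothing, a subsequence of $T(u_n)g$ converges uniformly on compacts to some $h\in C_b(E)$, and then $R(\lambda)h=R(\lambda)T(u_0)g$ plus injectivity on $C_b(E)$ gives $h=T(u_0)g$ directly.

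The second gap is the one you flag yourself: passing from time-continuity at each fixed $x$ to joint continuity in $(t,x)$. The equicontinuity you hope to extract from ``the supremum-norm continuity established in Step 2'' concerns $u\mapsto R(\lambda)T(u)g$, not $T(u)g$ itself, and the further smoothing $T(u_n)g=T(u_n-\eps)T(\eps)g$ only yields continuity of each individual function, not equicontinuity of the family $\{T(u_n)g\}$ at $x_0$. Again the ultra Feller property resolves this: the subsequence--subsequence argument upgrades the conclusion of the previous step to $T(u_n)g\to T(u_0)g$ \emph{uniformly on compact sets}, and joint continuity then follows at once by applying this uniform convergence on the compact set $\{x_n : n\in\N\}\cup\{x_0\}$. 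So the proposal is not repairable by ``DCT plus smoothing'' alone; you need the compactness statement for compositions of strong Feller operators (or an equivalent equicontinuity result) as an additional ingredient.
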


\begin{proof}
	By the semigroup law, every operator $T(t)$ is the product of two strong Feller operators and thus satisfies the \emph{ultra Feller property}, i.e.,\ if $(f_n)_{n \in \N}$ is a bounded sequence of measurable functions then $(T(t)f_n)_{n \in \N}$ has a subsequence that converges uniformly on compact subsets of $E$ to some bounded continuous function, see \cite[\S~1.5]{revuz}.

	Now consider a sequence $(t_n)$ in $(0, \infty)$ that converges to a time $t \in (0,\infty)$ and a function $f\in B_b(E)$. As a preliminary step, we prove that $T(t_n)f \to T(t)f$ uniformly on compact subsets of $E$. Note that $s\coloneqq \inf\{ t_n : n\in \N\}>0$ and that the sequence $(T(t_n-s ) f)$ is bounded. As $T(s)$ is ultra Feller, passing to a subsequence, we may and shall assume that $T(t_n)f = T(s)T(t_n-s)f$ converges uniformly on compact sets to some $g\in C_b(E)$. In particular, 
$T(t_n)f \weak g$. Using the $\sigma$-continuity of the Laplace transform $R(\lambda)$ and the fact that $R(\lambda)$ commutes with every $T(t)$, we find
	\[
		R(\lambda )g = \sigma\text{-}\lim_{\mathclap{n\to\infty}} R(\lambda)T(t_n)f = \sigma\text{-}\lim_{\mathclap{n\to\infty}}T(t_n)R(\lambda) f = T(t)R(\lambda)f = R(\lambda)T(t)f;
	\]
	For the third equality we used that $T$ is strongly continuous (with respect to $\norm{\,\cdot\,}_\infty$) on the range of $R(\lambda)$  (see \cite[Rem.\ 2.5]{k09}). 
	As $R(\lambda)$ is injective on $C_b(E)$, we must have $g=T(t)f$. Now a subsequence-subsequence argument yields that $T(t_n)f \to T(t)f$ uniformly on compact subsets of $E$.

	Finally, if $(t_n, x_n) \to (t,x)$, then by what was just done, $T(t_n)f \to T(t)f$ uniformly on the compact set $\{x_n : n\in \N\}\cup \{x\}$.
\end{proof}

\section{A monotone convergence theorem}
\label{sec.a-monotone-convergence-theorem}

The purpose of this section is to prove a convergence theorem for monotone sequences of semigroups of kernel operators.
We start by characterizing domination of semigroups of kernel operators in terms of their Laplace transforms. To that end, we need the following lemma.

\begin{lem} \label{l.separating}
	There exists a countable set $\mathscr{F} \subseteq B_b(E)$ such that a measure $\mu \in \cM_b(E)$ is positive if and only if
$\langle f, \mu \rangle \geq 0$ for all $f\in \cF$.
\end{lem}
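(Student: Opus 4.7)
The plan is to exploit the second countability and metrizability of the Polish space $E$ in order to approximate indicators of basic open sets by continuous Urysohn-type functions, thereby reducing positivity testing to a countable subfamily of $C_b(E) \subseteq B_b(E)$.

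First I would fix a compatible metric $d$ on $E$ and choose a countable base $\calU$ for its topology, closed under finite unions (still countable). Since $E$ is metrizable, for every open $W$ and every $x \in W$ there exist $U, V \in \calU$ with $x \in U \subseteq \overline{U} \subseteq V \subseteq W$. For every pair $(U, V) \in \calU \times \calU$ satisfying $\overline{U} \subseteq V$, $\overline U \neq \emptyset$ and $V^c \neq \emptyset$, I would set
\[
  f_{U,V}(x) := \frac{d(x, V^c)}{d(x, \overline{U}) + d(x, V^c)},
\]
and handle the two degenerate cases by including the constant functions $0$ and $\one$. Each $f_{U,V}$ lies in $C_b(E)$, takes values in $[0,1]$ and satisfies $\one_U \leq f_{U,V} \leq \one_V$. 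The countable collection $\mathscr{F}$ of all such functions is the candidate.

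The forward implication is immediate since each $f \in \mathscr{F}$ is nonnegative. For the converse, I would argue by contraposition: suppose $\mu \in \cM_b(E)$ is not positive, and let $\mu = \mu^+ - \mu^-$ with Hahn decomposition $E = P \cup N$, so that $\mu^-(E) > 0$ and $\mu^+(N) = 0$. Since $\mu^\pm$ are Radon on the Polish space $E$, I would pick $\eps \in (0, \mu^-(E)/2)$, then use inner regularity to choose a compact $K \subseteq N$ with $\mu^-(K) > \mu^-(E) - \eps$, followed by outer regularity to pick an open $W \supseteq K$ with $|\mu|(W \setminus K) < \eps$. Covering $K$ with finitely many pairs $U_{x_i} \subseteq \overline{U_{x_i}} \subseteq V_{x_i} \subseteq W$ chosen from $\calU$ as above, and setting $U := \bigcup_i U_{x_i}$, $V := \bigcup_i V_{x_i}$, one obtains an admissible pair $(U, V) \in \calU^2$ with $K \subseteq U \subseteq \overline{U} \subseteq V \subseteq W$ (the closure under finite unions is used precisely here). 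A short estimate exploiting $\mu^+(K) = 0$ then yields
\[
  \langle f_{U,V}, \mu \rangle \leq \mu^+(V) - \mu^-(U) \leq \mu^+(W) - \mu^-(K) < 2\eps - \mu^-(E) < 0,
\]
contradicting the assumption on $\mathscr{F}$.

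The main obstacle is chiefly bookkeeping rather than depth: one must verify that the countable base can be chosen to enjoy the sandwich property (between any open neighbourhood and a point, two basic opens that sandwich a closure) and that this property is compatible with closure under finite unions. Both are standard in metric spaces, but should be noted explicitly so that the Urysohn approximation and the subsequent regularity estimate fit together cleanly.
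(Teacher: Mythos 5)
Your argument is correct, but it follows a genuinely different route from the paper. The paper's proof is purely measure-theoretic: since $E$ is Polish, $\cB(E)$ is countably generated, so one may take a countable generating \emph{algebra} $\cE$ and set $\cF = \{\one_A : A \in \cE\}$; the converse implication is then a one-line application of the monotone class theorem to $\{A \in \cB(E) : \mu(A) \ge 0\}$. Your proof instead exploits the topology: metrizability to build Urysohn functions $f_{U,V}$ with $\one_U \le f_{U,V} \le \one_V$ indexed by a countable base closed under finite unions, and then the Hahn decomposition together with inner and outer regularity of finite Borel measures on Polish spaces to produce a test function with strictly negative integral against a non-positive $\mu$. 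The details you flag as bookkeeping do go through: in a metric space any countable base has the sandwich property ($x \in U \subseteq \overline{U} \subseteq V \subseteq W$ via two nested balls), this is preserved under the finite unions needed to cover the compact set $K$, and the degenerate case $V = E$ still yields a valid estimate since $\mu^+(E \setminus K) \le \abs{\mu}(W \setminus K)$. The trade-off is that your argument uses heavier machinery (tightness, Hahn decomposition) but delivers a formally stronger conclusion, namely $\cF \subseteq C_b(E)$ rather than merely $\cF \subseteq B_b(E)$; the paper's argument is shorter and works on any countably generated measurable space without reference to the topology. Either version suffices for the application in Theorem~\ref{thm:domination-semigroups}.
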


\begin{proof}
	As $E$ is Polish, its Borel $\sigma$-algebra is countably generated; let $\cE$ be a countable generator of $\cB (E)$. We may assume without loss that $\cE$ is an algebra, otherwise replacing $\cE$ with the algebra generated by it which is again countable.
	We claim that the set
	\begin{align*}
		\cF \coloneqq \set{\one_A : A \in \cE} \subset B_b(E),
	\end{align*}
	has the desired properties.
	Indeed, let $\mu \in \cM_b(E)$ and assume that $\langle f, \mu \rangle \geq 0$ for all $f\in \cF$. 
	Then
	\[
		\cE \subseteq \{ A\in \cB(E) : \mu (A)\geq 0\} \eqqcolon \cD.
	\]
	One can readily check that $\cD$ is a monotone class, so it contains the monotone class generated by $\cE$. But by the monotone class theorem, the latter is actually $\cB (E)$, so $\cD = \cB (E)$, proving $\mu \geq 0$.
\end{proof}

\begin{thm} \label{thm:domination-semigroups}
	Let $(T_1(t))_{t \geq 0}$ and $(T_2(t))_{t \geq 0}$ be positive and bounded semigroups of kernel operators, and assume that $T_j(t)C_b(E)\subseteq C_b(E)$ for all $t>0$ and $j=1,2$. 
	We denote their Laplace transforms by $(R_1(\lambda))_{\re \lambda > 0}$ and $(R_2(\lambda))_{\re \lambda > 0}$, respectively. 
	The following are equivalent:
	\begin{enumerate}
		[\upshape (i)]
		\item $R_1(\lambda) \leq R_2(\lambda)$ for all $\lambda >0$;
		\item $T_1(t) \leq T_2(t)$ for all $t>0$.
	\end{enumerate}
\end{thm}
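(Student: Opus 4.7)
The direction (ii) $\Rightarrow$ (i) is immediate: for non-negative $f \in B_b(E)$ and $\mu \in \cM_b(E)$, integrating the pointwise inequality $\langle T_1(t) f, \mu\rangle \leq \langle T_2(t) f, \mu\rangle$ against $e^{-\lambda t}\,dt$ yields $\langle R_1(\lambda) f, \mu\rangle \leq \langle R_2(\lambda) f, \mu\rangle$.

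For (i) $\Rightarrow$ (ii) I would rely on the Bernstein--Widder characterization of completely monotone functions as Laplace transforms of positive measures. Positivity of the $R_j(\lambda)$ combined with the telescoping identity
\[
R_2(\lambda)^{n+1} - R_1(\lambda)^{n+1} = \sum_{k=0}^{n} R_2(\lambda)^{n-k}\bigl(R_2(\lambda) - R_1(\lambda)\bigr) R_1(\lambda)^k
\]
upgrades (i) to $R_1(\lambda)^{n+1} \leq R_2(\lambda)^{n+1}$ for every $n \geq 0$. Using the pseudo-resolvent derivative formula $R^{(n)}(\lambda) = (-1)^n n!\, R(\lambda)^{n+1}$, this shows that for any fixed non-negative $f \in B_b(E)$ and $\mu \in \cM_b(E)$ the scalar function $\lambda \mapsto \langle (R_2 - R_1)(\lambda) f, \mu\rangle$ satisfies $(-1)^n(\cdot)^{(n)} \geq 0$ and is therefore completely monotone on $(0,\infty)$. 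By Bernstein's theorem it is the Laplace transform of a positive Borel measure on $[0,\infty)$, while it is at the same time the Laplace transform of the bounded measurable function $t \mapsto \langle (T_2 - T_1)(t) f, \mu\rangle$, so uniqueness of Laplace transforms forces $\langle T_1(t) f, \mu\rangle \leq \langle T_2(t) f, \mu\rangle$ for Lebesgue-almost every $t > 0$.

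Taking $\mu = \delta_x$ and $f = \one_A$ for $A$ ranging over the countable generator of $\cB(E)$ constructed in the proof of Lemma~\ref{l.separating}, a countable union of null sets produces, for each $x \in E$, a single Lebesgue null set $N_x \subset (0,\infty)$ such that the kernels of $T_1(t)$ and $T_2(t)$ at $x$ are ordered for every $t \notin N_x$; equivalently $T_1(t) g(x) \leq T_2(t) g(x)$ for every non-negative $g \in B_b(E)$ and $t \notin N_x$. To upgrade to every $t > 0$ I would exploit the semigroup identity $T_j(t) = T_j(s) T_j(t-s)$: given $t > 0$, $x \in E$ and $f \in B_b(E)$ with $f \geq 0$, any $s \in (0,t)$ satisfying both $s \notin N_x$ and the condition that $\{y : T_1(t-s) f(y) > T_2(t-s) f(y)\}$ is null with respect to the kernel of $T_1(s)$ at $x$ delivers the chain
\[
T_1(t) f(x) \leq T_2(s) T_1(t-s) f(x) \leq T_2(s) T_2(t-s) f(x) = T_2(t) f(x).
\]
The hard part is proving that such an $s$ always exists. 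The bad set in $(0, \infty) \times E$ has Lebesgue-null $y$-slices, but the measure on $(0, t) \times E$ weighted by the kernel of $T_1(s)$ at $x$ is not a product, so a bare Fubini argument is insufficient; I expect one must first establish the desired inequality for $f$ in the range of some $R_j(\nu)$ -- for which \cite[Rem.~2.5]{k09} provides norm-continuity of $t \mapsto T_j(t) R_j(\nu) g$, so that the almost-everywhere inequality becomes an everywhere inequality by continuity -- and then pass to general $f$ by an approximation argument with respect to the topology $\sigma$.
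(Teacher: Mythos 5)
Your first half is fine: replacing the paper's Post--Widder inversion by Bernstein's theorem (complete monotonicity of $\lambda\mapsto\langle(R_2(\lambda)-R_1(\lambda))f,\mu\rangle$ via the telescoping identity, then uniqueness of Laplace transforms) is a legitimate variant that reaches the same intermediate conclusion, namely $\langle T_1(t)f,\mu\rangle\le\langle T_2(t)f,\mu\rangle$ for a.e.\ $t$, and hence, via the countable separating family of Lemma~\ref{l.separating}, an ordering of the kernels at each fixed $x$ for $t$ outside a null set $N_x$.

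The gap is exactly where you say it is, and your proposed repairs do not close it. The Fubini obstruction is real (the measure $k_2^s(x,\cdot)\,\ud s$ on $(0,t)\times E$ disintegrates into kernels that may be singular, so Lebesgue-null $s$-sections of the bad set give no control on its slices in the $E$-direction), and the fallback via ranges of resolvents fails at two points: \cite[Rem.~2.5]{k09} gives norm-continuity of $t\mapsto T_j(t)h$ only for $h$ in the range of $R_j(\nu)$, whereas you would need $t\mapsto T_2(t)h$ to be continuous for $h\in\operatorname{ran}R_1(\nu)$; and the approximation $\nu R_j(\nu)f\weak f$ of a general $f\in B_b(E)$ by elements of that range requires a stochastic continuity that is not assumed. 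The telltale sign is that your argument never uses the hypothesis $T_j(t)C_b(E)\subseteq C_b(E)$, which is precisely the tool the paper deploys here: for $f\in C_b(E)_+$ the function $T_2(t)f-T_1(t)f$ is continuous, so taking $\mu=\delta_{x_n}$ for a countable \emph{dense} set $(x_n)$ yields a single null set $N$ with $T_1(t)f\le T_2(t)f$ everywhere on $E$ for all $t\notin N$ and all $f\in C_b(E)_+$. The upgrade to all $t$ is then not done by splitting $t=s+(t-s)$ at a well-chosen $s$, but by observing that the good set $M=\{t:T_1(t)|_{C_b(E)}\le T_2(t)|_{C_b(E)}\}$ satisfies $M+M\subseteq M$ (here the $C_b$-invariance is used again to propagate the inequality through the semigroup law) and has null complement, whence $M=(0,\infty)$ by the Steinhaus-type result \cite[Lem.~3.16.5]{abhn2011}; the extension from $C_b(E)$ to $B_b(E)$ is then routine for positive kernels on a Polish space. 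Without importing this (or an equivalent) use of the $C_b$-invariance, your argument does not prove the theorem.
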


\begin{proof}
	(i) $\Rightarrow$ (ii). 
	For each $f \in B_b(E)_+$ and $\mu \in \cM_b(E)_+$, define $r_{f, \mu} \colon (0, \infty) \to [0, \infty)$ by setting
	\begin{align*}
		r_{f, \mu}(\lambda) = \dual{(R_2(\lambda) - R_1(\lambda)) f, \mu} = \int_0^\infty \ue^{-\lambda t} \dual{(T_2(t) - T_1(t)) f, \mu} \, \ud t
	\end{align*}
	As $(R_1(\lambda))_{\re \lambda > 0}$ and $(R_2(\lambda))_{\re \lambda > 0}$ are pseudo-resolvents, the map $r_{f, \mu}$ is infinitely differentiable with
	\begin{align*}
		\frac{\ud^n}{\ud \lambda^n} r_{f, \mu}(\lambda) = (-1)^{n} n! \dual{(R_2^{n + 1}(\lambda) - R_1^{n + 1}(\lambda)) f, \mu}
	\end{align*}
	for each integer $n \ge 0$.
	Thus, the Post--Widder inversion theorem \cite[Thm.\ 1.7.7]{abhn2011} yields the existence of a null set $N(f, \mu) \subseteq (0, \infty)$ such that
	\begin{align*}
		\dual{(T_2(t) - T_1(t)) f, \mu} = \lim _{n \to \infty} (-1)^n \frac{1}{n!}\biggl(\frac{n}{t}\biggr)^{n + 1} \frac{\ud^n}{\ud \lambda^n} r_{f, \mu} \biggl(\frac{n}{t}\biggr) \geq 0
	\end{align*}
	for all $t \in (0, \infty) \setminus N(f, \mu)$. 
	
	Now let $\cF \subseteq B_b(E)$ denote a countable set with the property stated in Lemma~\ref{l.separating} and fix $\mu \in \cM_b(E)_+$.
	Then $N(\mu) = \bigcup_{f \in \cF} N(f, \mu)$ is a null set and 
	\begin{align*}
		\dual{f, (T_2'(t) - T_1'(t)) \mu} = \dual{(T_2(t) - T_1(t)) f, \mu} \geq 0.
	\end{align*}
	for all $f \in \calF$ and all $t \not\in N(\mu)$.
	By the choice of $\cF$ this implies that $T_1'(t) \mu \leq T_2'(t) \mu$ for all $t \notin N(\mu)$.
	
	Next consider a sequence $(x_n)_{n\in \N} \subset E$ that is dense in $E$ and put $N \coloneqq \bigcup_{n \in \N} N(\delta_{x_n})$, where $\delta_x$ refers to the Dirac measure in $x$. Then $N$ is a null set, and for each continuous function $f\geq 0$ we have
	\begin{align*}
		(T_2(t) - T_1(t))f)(x_n) = \langle T_2(t)f-T_1(t)f, \delta_{x_n}\rangle \geq 0
	\end{align*}
	for all $n \in \N$ and all $t \not\in N$. 
	As $T_2(t)f-T_1(t)f$ is continuous by assumption, $T_1(t)f \leq T_2(t)f$ for all $t\not\in N$.

	Finally, consider the set
	\begin{align*}
		M \coloneqq \set{t \in (0, \infty) : T_1(t)|_{C_b(E)} \leq T_2(t)|_{C_b(E)}}.
	\end{align*}
	Then $(0, \infty) \setminus M \subseteq N$, so $(0, \infty) \setminus M$ is a null set. From the semigroup law, one easily deduces that $M + M \subseteq M$. Thus \cite[Lem.\ 3.16.5]{abhn2011} implies $M = (0, \infty)$. 
	Hence, $T_1(t)f\leq T_2(t)f$ for all $t>0$ and $f\in C_b(E)$, from which (ii) easily follows.\smallskip

	The converse implication (ii) $\Rightarrow$ (i) is clear.
\end{proof}

We now turn to the main topic of this section: convergence of monotone sequences of semigroups. 
We begin, however, with single operators instead of semigroups.
Within the ordered vector space $\cL(B_b(E), \sigma)$ we use the following notation: 
if $T$ is an element and $(T_n)$ a sequence in this space, we write $T_n \uparrow T$ to say that the sequence $(T_n)$ is increasing 
(in the sense that $T_n \le T_{n+1}$ for all $n$) and that $T$ is the supremum of this sequence.

\begin{lem} 
	\label{lemma:increasing-operator-sequence-on-kb-space}
	Consider an increasing sequence $(T_n)_{n \in \N}$ of positive operators in $\cL(B_b(E), \sigma)$ 
	which satisfy $\norm{T_n} \le M$ for a number $M \ge 0$ and all $n \in \N$. 
	Then:
	\begin{enumerate}[\upshape (i)]
		\item 
		There exists a positive operator $T \in \cL(B_b(E), \sigma)$ of norm $\norm{T} \le M$
		such that $T_n f \weak T f$ for each $f \in B_b(E)$. 
		In particular, $T_n \uparrow T$. 
		
		\item 
		If $(S_n)_{n \in \N}$ is another norm bounded increasing sequence of positive operators in $\cL(B_b(E), \sigma)$, 
		whose limit we denote by $S$, then $S_n T_n \uparrow S T$. 
	\end{enumerate}
	Corresponding statements hold for decreasing sequences of positive operators.
\end{lem}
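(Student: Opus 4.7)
The plan for (i) is to construct $T$ first as a bounded linear map on $B_b(E)$ and then upgrade it to a kernel operator. For $f \in B_b(E)_+$, the sequence $((T_n f)(x))_{n \in \N}$ is increasing and bounded by $M\norm{f}_\infty$, so I would set $(Tf)(x) \coloneqq \lim_{n \to \infty}(T_n f)(x)$ and extend by linearity via $f = f^+ - f^-$. The resulting map is positive, linear, bounded by $M$ in norm, and sends measurable functions to measurable functions, since pointwise limits of measurable functions are measurable. Once $T$ is shown to be $\sigma$-continuous, the weak convergence $T_n f \weak Tf$ follows from pointwise convergence together with the uniform norm bound, and $T_n \uparrow T$ in the ordered space $\cL(B_b(E), \sigma)$ is immediate.

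The main obstacle will be showing $T \in \cL(B_b(E), \sigma)$, i.e., that $T$ is actually a kernel operator. My plan is to use the characterization from \cite[Prop.~3.1]{k11}, which reduces this to verifying $T^* \cM_b(E) \subseteq \cM_b(E)$. Fix $\mu \in \cM_b(E)_+$; then the sequence of positive finite measures $(T_n'\mu)$ on $\cB(E)$ is pointwise increasing and uniformly bounded by $M\norm{\mu}$, since $(T_n'\mu)(A) = \la T_n \one_A, \mu\ra$. The standard monotone convergence principle for positive measures then shows that the set function $\nu(A) \coloneqq \lim_n (T_n'\mu)(A)$ is itself a positive finite Borel measure; monotone convergence of integrals yields $\la f, T^*\mu\ra = \la Tf, \mu\ra = \int f \dx \nu$ first for $f \in B_b(E)_+$ and then for all $f \in B_b(E)$. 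Hence $T^*\mu \in \cM_b(E)$, and the signed case follows from the Jordan decomposition.

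For (ii), I would first argue that $(S_n T_n)$ is itself an increasing, uniformly bounded sequence of positive kernel operators: for $f \in B_b(E)_+$ and $n \le m$, positivity together with $T_n \le T_m$ and $S_n \le S_m$ gives $S_n T_n f \le S_n T_m f \le S_m T_m f$. Part~(i) then produces a kernel operator $U$ with $S_n T_n \uparrow U$, and the task is to identify $U$ with $ST$. The inequality $U \le ST$ is immediate from $S_n T_n f \le S_n T f \le S T f$ on positive $f$. For the reverse, I would fix $k$ and observe that $S_k T_n f \le S_n T_n f \le Uf$ for all $n \ge k$ and $f \in B_b(E)_+$; passing to $n \to \infty$ by the $\sigma$-continuity of $S_k$ combined with $T_n f \weak Tf$ yields $S_k T f \le Uf$, and taking the supremum over $k$ gives $S T f \le Uf$. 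Finally, the decreasing case reduces to the increasing one by applying (i) and (ii) to $(T_1 - T_n)_{n \in \N}$ and $(S_1 - S_n)_{n \in \N}$, which are increasing, norm-bounded sequences of positive kernel operators.
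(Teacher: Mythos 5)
Your proof is correct. The only genuinely different step is the construction of the limit operator in part (i): the paper works directly on the kernel side, setting $k(x,A) \coloneqq \sup_n k_n(x,A)$ and citing \cite[Lem.~3.5]{gk15} for the fact that this pointwise supremum is again a kernel, after which $T$ is simply the associated kernel operator and $T_nf \weak Tf$ is read off from the definition. You instead define $T$ by pointwise limits on $B_b(E)$ and then verify $T \in \cL(B_b(E),\sigma)$ through the adjoint characterization of \cite[Prop.~3.1]{k11}, checking countable additivity of $T^*\mu$ via the monotone-limit-of-measures argument. The two routes are essentially dual: the paper's argument is your argument specialized to $\mu = \delta_x$, together with the (trivial) measurability of $x \mapsto \sup_n k_n(x,A)$, with the measure-theoretic content outsourced to \cite{gk15} rather than proved inline as you do. In part (ii) both proofs establish the same two inequalities; you freeze the first factor (let $n \to \infty$ in $S_kT_nf$ using $\sigma$-continuity of $S_k$, then take the supremum over $k$), while the paper freezes the second (let $n \to \infty$ in $S_nT_mf$, then let $m \to \infty$ using $\sigma$-continuity of $S$) --- an immaterial difference. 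Your explicit reduction of the decreasing case to the increasing one via $(T_1 - T_n)$ and $(S_1 - S_n)$ is sound, provided one notes that the cross terms $S_1T_n$ and $S_nT_1$ also converge (by $\sigma$-continuity of $S_1$ and by part (i), respectively); the paper omits this case entirely, declaring it ``similar.''
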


\begin{proof}
	We only prove the results for increasing sequences, the case of decreasing sequences is similar. 

	(i) 
	We denote the kernel associated to $T_n$ by $k_n$ and put $k(x,A) \coloneqq \sup_n k_n(x,A)$. 
	It follows from $\norm{T_n} \le M$ for all $n$ that $k(x, E) \leq M$ for all $x\in E$. 
	Moreover, $k$ is again a kernel, cf.\ \cite[Lem.\ 3.5]{gk15}. 
	We denote the kernel operator associated with $k$ by $T$. 
	Then $\norm{T} \le M$ and it follows from the definition of $k$ that $T_nf \weak Tf$ for all $f\in B_b(E)$;
	hence, $T=\sup_n T_n$.
	\smallskip

	(ii) 
	According to~(i), the supremum $\sup_{n \in \N} S_n T_n$ exists, and we have to show that it is equal to $ST$. 
	Clearly, $S_n T_n \leq ST$. 
	On the other hand, for $f\in B_b(E)_+$ and $m\in \N$, we have
	\begin{align*}
		\big(\sup_{n \in \N} S_n T_n\big) f \geq \big(\sup_{n \in \N} S_n T_m\big) f = \sigma\text{-}\lim_n (S_n T_m) f = \sigma\text{-}\lim_n S_n (T_m f) = S T_m f,
	\end{align*}
	where the second and the last equality follow from~(i). 
	However, $T_mf\weak Tf$ and thus, by the $\sigma$-continuity of $S$, 
	the latter converges to $STf$, which proves the converse inequality $\sup_{n \in \N} S_n T_n \geq ST$.
\end{proof}

It is now easy to show that  monotone and uniformly bounded sequences of semigroups have limit semigroups.

\begin{thm} 
	\label{t.limit-semigroup}
	Let $(T_n(t))_{t \geq 0}$ be a sequence of bounded and positive semigroups of kernel operators with Laplace transforms $(R_n(\lambda))_{\re \lambda > 0}$, and assume that there exists a number $M \ge 0$ such that $\norm{T_n(t)} \le M$ for all $t > 0$ and all $n$.
	Then the following are equivalent:
	\begin{enumerate}[\upshape (i)]
		\item For every $t > 0$ the sequence $(T_n(t))$ is increasing (decreasing).
		\item For every $\lambda >0$ the sequence $(R_n(\lambda))$ is increasing (decreasing).
	\end{enumerate}
	If the equivalent assertions {\upshape(i)} and {\upshape(ii)} are satisfied the family of operators $(T(t))_{t \geq 0}$, defined by 
	\begin{align*}
		T(t) f := \sigma\text{-}\lim_{\mathclap{n\to\infty}} T_n(t) f
	\end{align*}
	for each $f \in B_b(E)$, is a positive and bounded semigroup of kernel operators with the property $\norm{T(t)} \le M$ for all $t > 0$. 
	Its pseudo-resolvent $(R(\lambda))_{\re \lambda > 0}$ is given by
	\begin{align*}
		R(\lambda) f  = \sigma\text{-}\lim_{\mathclap{n\to\infty}} R_n(\lambda) f.
	\end{align*}
	for all $f \in B_b(E)$ and $\re \lambda > 0$. 
\end{thm}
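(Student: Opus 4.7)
The plan is to establish the equivalence $(i) \Leftrightarrow (ii)$ and then, assuming $(i)$, to construct the limit semigroup and identify its pseudo-resolvent. The direction $(i) \Rightarrow (ii)$ is immediate: if $T_n(t) \leq T_{n+1}(t)$ for every $t > 0$, then for each $f \in B_b(E)_+$ and $\mu \in \cM_b(E)_+$ the nonnegative measurable function $t \mapsto \langle T_n(t) f, \mu\rangle$ is monotone in $n$; integrating against $\ue^{-\lambda t}$ preserves this and gives $R_n(\lambda) \leq R_{n+1}(\lambda)$ for all $\lambda > 0$.

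For $(ii) \Rightarrow (i)$, I would adapt the Post--Widder inversion argument from Theorem~\ref{thm:domination-semigroups}. Since each $R_n(\lambda)$ is positive and $R_n(\lambda) \leq R_{n+1}(\lambda)$, composition preserves the inequality, yielding $R_n(\lambda)^{k+1} \leq R_{n+1}(\lambda)^{k+1}$ for every $k$, and Post--Widder inversion then gives $\langle T_n(t) f, \mu\rangle \leq \langle T_{n+1}(t) f, \mu\rangle$ for $f, \mu \geq 0$ and $t$ outside a null set depending on $n$, $f$, $\mu$. Reducing to the countable separating family $\cF$ of Lemma~\ref{l.separating} produces a single null set on which monotonicity holds in a scalar sense; the closure property $M + M \subseteq M$ for $M \coloneqq \{t > 0 : T_n(t) \leq T_{n+1}(t)\}$ (a direct consequence of the semigroup law and positivity of each $T_k$) combined with \cite[Lem.\ 3.16.5]{abhn2011} then extends monotonicity to every $t > 0$.

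Assuming $(i)$, I apply Lemma~\ref{lemma:increasing-operator-sequence-on-kb-space}~(i) to define $T(t) \coloneqq \sup_n T_n(t)$ as a positive element of $\cL(B_b(E), \sigma)$ with $\norm{T(t)} \leq M$ and $T_n(t) f \weak T(t) f$ for all $f \in B_b(E)$. The semigroup law $T(t+s) = T(t) T(s)$ follows from $T_n(t+s) = T_n(t) T_n(s)$ and Lemma~\ref{lemma:increasing-operator-sequence-on-kb-space}~(ii). Joint measurability of $(t, x) \mapsto T(t) f(x)$ holds for $f \geq 0$ as a pointwise supremum of jointly measurable functions and extends to arbitrary $f \in B_b(E)$ by linearity. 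For the Laplace-transform identity, the classical monotone convergence theorem applied to
\[
	\langle R_n(\lambda) f, \mu\rangle = \int_0^\infty \ue^{-\lambda t} \langle T_n(t) f, \mu\rangle \, \ud t
\]
for $f, \mu \geq 0$ shows that the right-hand side converges to $\int_0^\infty \ue^{-\lambda t}\langle T(t) f, \mu\rangle \, \ud t$, whereas Lemma~\ref{lemma:increasing-operator-sequence-on-kb-space} identifies the limit of the left-hand side as $\langle R(\lambda) f, \mu\rangle$ with $R(\lambda) = \sup_n R_n(\lambda)$; hence $R$ is the pseudo-resolvent of $T$, and linearity yields the claim for all $f$ and $\mu$.

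The main obstacle I expect is lifting the Post--Widder almost-everywhere inequality in $(ii) \Rightarrow (i)$ to every $t > 0$: unlike in Theorem~\ref{thm:domination-semigroups}, no $C_b$-Feller property is at our disposal, so the spatial continuity of $T_n(t) f$ that drives the corresponding step there is unavailable, and the argument must rely entirely on the countable separating structure of Lemma~\ref{l.separating} together with the semigroup closure property $M + M \subseteq M$ to conclude pointwise everywhere monotonicity.
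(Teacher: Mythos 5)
The construction of the limit semigroup in your proposal matches the paper's: Lemma~\ref{lemma:increasing-operator-sequence-on-kb-space}(i) for existence, positivity and the norm bound, part~(ii) for the semigroup law, measurability via pointwise limits, and monotone (the paper uses dominated) convergence for the Laplace transform. The trivial direction (i)~$\Rightarrow$~(ii) is also fine. The problem lies in (ii)~$\Rightarrow$~(i), and it sits exactly at the spot you yourself flag as ``the main obstacle'': your claimed workaround does not work. The Post--Widder inversion produces, for each pair $(f,\mu)$, an exceptional null set $N(f,\mu)$; Lemma~\ref{l.separating} lets you take the union over a \emph{countable} family of $f$'s, so you obtain, \emph{for each fixed} $\mu\in\cM_b(E)_+$, a null set $N(\mu)$ with $T_n'(t)\mu\le T_{n+1}'(t)\mu$ for $t\notin N(\mu)$. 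But the operator inequality $T_n(t)\le T_{n+1}(t)$ requires this for \emph{all} $\mu\ge 0$, equivalently for all Dirac measures $\delta_x$, $x\in E$ --- an uncountable family, whose union of exceptional sets need not be null. Consequently you have no control on $(0,\infty)\setminus M$ for your set $M=\{t>0: T_n(t)\le T_{n+1}(t)\}$, and \cite[Lem.\ 3.16.5]{abhn2011} cannot be invoked: the relation $M+M\subseteq M$ is useless until one knows the complement of $M$ is null. (Note also that the ``pointwise at a fixed $x$'' variant $M_x$ does not satisfy $M_x+M_x\subseteq M_x$, since $[T_n(s)g](x)$ depends on the values of $g$ everywhere, so you cannot localize the argument either.)

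The paper's route is to invoke Theorem~\ref{thm:domination-semigroups}, whose proof bridges exactly this gap by using the hypothesis $T_j(t)C_b(E)\subseteq C_b(E)$: for $f\in C_b(E)_+$ the function $T_{n+1}(t)f-T_n(t)f$ is continuous, so nonnegativity on a countable dense set of points $x_k$ (with exceptional set $N=\bigcup_k N(\delta_{x_k})$, a countable union and hence null) propagates to all of $E$; the inequality on $C_b(E)_+$ then determines the ordering of the kernels and hence of the operators on $B_b(E)$. You are right that Theorem~\ref{t.limit-semigroup} as stated does not list this $C_b$-invariance assumption, so the citation of Theorem~\ref{thm:domination-semigroups} is not literally self-contained --- in all of the paper's applications the approximating semigroups are strongly Feller, so the hypothesis is available there --- but replacing that hypothesis by ``the countable separating structure of Lemma~\ref{l.separating}'' alone, as you propose, is not a proof: some spatial regularity (or an additional idea) is needed to pass from almost every $t$ and countably many evaluation points to every $t$ and every point.
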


\begin{proof}
	The equivalence of (i) and (ii) is immediate from Theorem \ref{thm:domination-semigroups}, 
	so assume now that~(i) and~(ii) are satisfied.
	
	It follows from Lemma~\ref{lemma:increasing-operator-sequence-on-kb-space}(i) that, for each $t > 0$, the operator $T(t) \in \cL(B_b(E), \sigma)$ is positive and has norm at most $M$. 
	Lemma~\ref{lemma:increasing-operator-sequence-on-kb-space}(ii) yields the semigroup law for $T$. 
	Moreover, the function 
	\begin{align*}
		(t,x) \mapsto (T(t)f)(x) = \lim_{n\to\infty} \big(T_n(t)f\big) (x)
	\end{align*}
	is clearly measurable for all $f \in B_b(E)$. 
	Hence, $(T(t))_{t > 0}$ defines semigroup of kernel operators.
	
	As explained in Section~\ref{sec.semigroups-of-kernel-operators}, the pseudo-resolvent operators $R(\lambda) \in \cL(B_b(E), \sigma)$ satisfy
	\begin{align*}
		\dual{R(\lambda) f, \mu} \coloneqq \int_0^\infty \ue^{-\lambda t} \dual{T(t) f, \mu} \, \ud t 
	\end{align*}
	for all $f \in B_b(E)$, $\mu \in \cM_b(E)$ and $\re \lambda > 0$. 
	Since an analogous formula holds for $\dual{R_n(\lambda) f, \mu}$ for each $n$, the convergence of $\dual{R_n(\lambda) f, \mu}$ to $\dual{R(\lambda) f, \mu}$ follows from the dominated convergence theorem. 
\end{proof}

The convergence $R_n(\lambda)f \weak R(\lambda)f$ for all $f\in B_b(E)$ in Theorem~\ref{t.limit-semigroup} can equivalently be expressed in terms of the full generators:

\begin{cor}
	\label{c.generatorconv}
	Assume that, in the situation of Theorem~\ref{t.limit-semigroup}, the equivalent assertions {\upshape(i)} and {\upshape(ii)} are satisfied.
	Denote the full generator of $T$ by $A$ and, for every $n\in \N$, the full generator of $T_n$ by $A_n$ . 
	Then we have $(u,f) \in A$ if and only if there is a sequence of pairs $(u_n,f_n)\in A_n$ with $u_n\weak f$ and $f_n\weak f$.
\end{cor}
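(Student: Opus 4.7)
The plan hinges on the characterization
\[
    (u,f) \in A \iff R(\lambda)(\lambda u - f) = u
\]
(for one, equivalently every, $\lambda > 0$), which follows from $R(\lambda) = (\lambda - A)^{-1}$ in the multi-valued sense, and likewise for each $A_n$. Once this is in hand, the corollary reduces to the joint $\sigma$-convergence $R_n(\lambda) g_n \weak R(\lambda) g$ whenever $g_n \weak g$ in $B_b(E)$, and this is the only genuine technical point.

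For the forward direction, given $(u,f) \in A$ I would fix any $\lambda > 0$, set $g \coloneqq \lambda u - f$, and define approximants by $u_n \coloneqq R_n(\lambda) g$ and $f_n \coloneqq \lambda u_n - g$. Then $(u_n, f_n) \in A_n$ by construction, while Theorem~\ref{t.limit-semigroup} immediately delivers $u_n \weak R(\lambda) g = u$ and hence $f_n \weak f$.

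For the reverse direction, given $(u_n, f_n) \in A_n$ with $u_n \weak u$ and $f_n \weak f$, I would write $g_n \coloneqq \lambda u_n - f_n$ and $g \coloneqq \lambda u - f$, so that $u_n = R_n(\lambda) g_n$ and $g_n \weak g$. By uniqueness of $\sigma$-limits, the goal becomes $R_n(\lambda) g_n \weak R(\lambda) g$, which I would split as
\[
    R(\lambda) g - R_n(\lambda) g_n \;=\; R(\lambda)(g - g_n) \;+\; \bigl( R(\lambda) - R_n(\lambda) \bigr) g_n.
\]
The first summand tends to $0$ in $\sigma$ by the $\sigma$-continuity of $R(\lambda)$ applied to $g - g_n \weak 0$. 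For the second, monotonicity of $(R_n(\lambda))$ becomes crucial: it furnishes a positive operator $D_n$ (equal to $R(\lambda) - R_n(\lambda)$ in the increasing case, or its opposite in the decreasing one) with $D_n \one(x) \to 0$ for every $x \in E$, by Theorem~\ref{t.limit-semigroup} applied to the test function $\one \in B_b(E)$. Combined with the uniform bound $\sup_n \|g_n\|_\infty < \infty$ (weakly convergent sequences in $B_b(E)$ are norm bounded), the pointwise estimate
\[
    \bigl| \bigl( R(\lambda) - R_n(\lambda) \bigr) g_n(x) \bigr| \;\le\; \sup_k \|g_k\|_\infty \cdot D_n \one(x)
\]
closes the argument.

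The main obstacle is precisely this second summand, where the operator and its argument move simultaneously: without the monotonicity assumption I would have no obvious way to dominate it pointwise, but with it the convergence reduces to the single scalar statement $D_n \one(x) \to 0$, which is exactly what Theorem~\ref{t.limit-semigroup} supplies.
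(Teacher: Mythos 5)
Your proof is correct and follows the paper's strategy almost verbatim: both arguments rest on the characterization $(u,f)\in A \iff u = R(\lambda)(\lambda u - f)$, the forward direction is identical (take $u_n = R_n(\lambda)g$ with $g = \lambda u - f$), and the reverse direction in both cases reduces to showing $R_n(\lambda)g_n \weak R(\lambda)g$ by inserting a cross term. The only difference is which cross term you insert. The paper writes $R_n(\lambda)g_n - R(\lambda)g = R_n(\lambda)(g_n - g) + (R_n(\lambda) - R(\lambda))g$ and uses monotonicity to dominate the first summand via $\abs{R_n(\lambda)(g_n-g)} \le R(\lambda)\abs{g_n - g}$, which tends to $0$ by $\sigma$-continuity of the fixed operator $R(\lambda)$; the second summand converges by Theorem~\ref{t.limit-semigroup}. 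You instead write $R(\lambda)(g - g_n) + (R(\lambda) - R_n(\lambda))g_n$, handle the first summand directly by $\sigma$-continuity of $R(\lambda)$, and control the second via positivity of $D_n = \pm(R(\lambda)-R_n(\lambda))$ together with $D_n\one(x)\to 0$, which is Theorem~\ref{t.limit-semigroup} applied to $\one$. Both uses of monotonicity are equally elementary and both splits are complete; neither buys anything substantive over the other.
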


\begin{proof}
	First assume that $(u,f)\in A$, which is equivalent to $u=R(1)(u - f)$. 
	We set $g\coloneqq u-f$ and $u_n \coloneqq R_n(1)g$.
	By assumption $u_n \weak u$. Moreover, $f_n \coloneqq  u_n - g \weak u- g =f$. As $(u_n, f_n) \in A_n$, we have found a sequence as claimed.

	To see the converse, assume that $(u_n,f_n) \in A_n$ is such that $u_n \weak u$ and $f_n \weak f$. Let us set $g_n \coloneqq u_n - f_n$ and
	$g= u-f$. Then $R_n(1)g_n = u_n \weak u$. On the other hand, $R_n(1)g_n \weak R(1)g$. Indeed, in the case where $R_n(1)\uparrow R(1)$ we have
	\begin{align*}
		\abs{R_n(1)g_n - R(1)g} & \leq \abs{R_n(1)(g_n-g)} + \abs{R_n(1)g - R(1)g} \\
		& \leq R(1) \abs{g_n-g} + \abs{R_n(1)g - R(1)g} \to 0,
	\end{align*}
	as this is true for the second term by assumption and follows for the first term from the $\sigma$-continuity of $R(1)$. In the case where $R_n(1)\downarrow R(1)$ one can argue similarly. Combining these two facts, it follows that $u=R(1)g = R(1)(u-f)$, which is equivalent to $(u,f) \in A$.
\end{proof}

\section{The strong Feller property for the limit semigroup}
\label{sec.strong-feller}

Throughout this section, we again consider the situation of Theorem~\ref{t.limit-semigroup}. We seek to impose additional assumptions that ensure that the limit semigroup enjoys the strong Feller property if the same is true for the approximating semigroups $T_n$. 
In contrast to the previous section, we now require the involved semigroups to be sub Markovian.
More precisely, we will assume the following:

\begin{hyp}
	\label{h.sf}
	For each $n\in \N$ let $(T_n(t))_{t>0}$ be a semigroup of sub Markovian kernel operators with pseudo-resolvent $(R_n(\lambda))_{\Re\lambda >0}$. 
	Assume moreover that one of the following conditions is satisfied:
	\begin{enumerate}[(1)]
		\item 
		$R_n(\lambda) \leq R_{n+1}(\lambda)$ for all $\lambda >0$ and $n\in \N$ or
		
		\item 
		$R_n(\lambda) \geq R_{n+1}(\lambda)$ for all $\lambda >0$ and $n\in \N$.
	\end{enumerate}
	In either case, by Theorem \ref{t.limit-semigroup} the sequence if semigroups converges to a sub Markovian semigroup of kernels operators which we denote by $(T(t))_{t>0}$; 
	we denote its pseudo-resolvent by $(R(\lambda))_{\Re\lambda >0}$. 
	Moreover, we assume that the following two conditions are satisfied:
	\begin{enumerate}[(A)]
		\item For every $n\in \N$ the semigroup $T_n$ enjoys the strong Feller property.
		
		\item For some $\lambda >0$, the function $R(\lambda)\one$ is continuous, i.e., an element of $C_b(E)$.
	\end{enumerate}
	Here, $\one$ denotes the constant function on $E$ with value $1$.
\end{hyp}

We will prove in Theorem~\ref{t.strongfeller} that the additional assumptions in Hypothesis \ref{h.sf}(A) and \ref{h.sf}(B) imply that the limit semigroup $(T(t))_{t > 0}$ also enjoys the strong Feller property. 
As a preliminary result, we establish some properties of the set $\mathscr{J}_x(t)$, defined $t\in (0,\infty)$ and $x\in E$ by
\begin{equation}\label{eq.jx}
	\mathscr{J}_x(t) \coloneqq \{ f\in B_b(E) : T(t)f \mbox{ is continuous at the point } x\}.
\end{equation}

\begin{lem}\label{l.jx}
	Let all assumptions in Hypothesis~\ref{h.sf} be satisfied (except for possibly~{\upshape(B)}, which is not needed for this lemma).
	For each $t\in (0,\infty)$ and each $x\in E$ the set $\mathscr{J}_x(t)$ defined in~\eqref{eq.jx} has the following properties: 
	\begin{enumerate}[\upshape (i)]
		\item $\mathscr{J}_x(t)$ is a vector space;
		\item If two vectors $f,g \in B_b(E)$ satisfy $0\leq f \leq g$ and $g \in \mathscr{J}_x(t)$, then $f \in \mathscr{J}_x(t)$;
		\item If $\one\in\mathscr{J}_x(t)$, then $\mathscr{J}_x(t) = B_b(E)$.
	\end{enumerate}
\end{lem}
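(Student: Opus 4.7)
Part (i) is immediate from linearity of $T(t)$: a linear combination of two functions continuous at $x$ is continuous at $x$. Part (iii) follows quickly from (i) and (ii): any $f \in B_b(E)$ decomposes as $f = f_+ - f_-$ with $0 \leq f_\pm \leq \norm{f}_\infty \one$; if $\one \in \mathscr{J}_x(t)$, then (i) ensures $\norm{f}_\infty \one \in \mathscr{J}_x(t)$, (ii) then gives $f_\pm \in \mathscr{J}_x(t)$, and (i) once more yields $f \in \mathscr{J}_x(t)$. Thus the substantive assertion is (ii).

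For (ii) I plan to exploit the monotone approximation $T_n(t) \to T(t)$ from Theorem~\ref{t.limit-semigroup} together with the strong Feller property of each $T_n$ (Hypothesis~\ref{h.sf}(A)). I treat the increasing case $T_n(t) \uparrow T(t)$ first; the decreasing case is entirely symmetric after swapping the roles of $T_n$ and $T$ in the residuals defined below. Introduce
\[
	h_n \coloneqq T(t)g - T_n(t)g \geq 0, \qquad k_n \coloneqq T(t)f - T_n(t)f \geq 0,
\]
both of which decrease pointwise to $0$. The decisive observation is the sandwich $k_n \leq h_n$, obtained by applying the positive operator $T(t) - T_n(t)$ to the inequality $0 \leq f \leq g$: this is precisely what transfers the continuity information we have about $T(t)g$ into control of $T(t)f$.

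With the sandwich in hand, continuity of $T(t)f$ at $x$ reduces to a standard three-term estimate: for arbitrary $y \in E$,
\[
	\abs{T(t)f(y) - T(t)f(x)} \leq h_n(y) + \abs{T_n(t)f(y) - T_n(t)f(x)} + h_n(x).
\]
Given $\eps > 0$, I first choose $n$ so large that $h_n(x) < \eps/3$, using $h_n(x) \downarrow 0$. With $n$ fixed, $h_n$ is continuous at $x$: $T_n(t)g \in C_b(E)$ by Hypothesis~\ref{h.sf}(A), while $T(t)g$ is continuous at $x$ by the assumption $g \in \mathscr{J}_x(t)$. This yields a neighbourhood $U_1$ of $x$ on which $h_n < \eps/3$. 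Finally, the strong Feller property of $T_n$ produces a neighbourhood $U_2$ of $x$ on which $\abs{T_n(t)f(y) - T_n(t)f(x)} < \eps/3$, so the right-hand side above is below $\eps$ on $U_1 \cap U_2$.

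The only step that requires any finesse is the sandwich $k_n \leq h_n$: without it one cannot bound the unknown residual $k_n(y)$ for $y$ near $x$ by a quantity with known continuity behaviour, because the continuity assumption available by hypothesis concerns $T(t)g$ and not $T(t)f$ itself. Once that inequality is exploited, the remainder is a routine three-epsilons argument.
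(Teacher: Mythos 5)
Your proof is correct, and parts (i) and (iii) are handled exactly as in the paper (which also reduces (iii) to (i) and (ii) via the decomposition into positive and negative parts). For part (ii) your route differs in packaging from the paper's, though it rests on the same two ingredients: the monotone pointwise convergence $T_n(t)h \to T(t)h$ for $h \ge 0$ and the continuity of each $T_n(t)h$ from Hypothesis~\ref{h.sf}(A). The paper observes that $T(t)f = \sup_n T_n(t)f$ and $T(t)(g-f) = \sup_n T_n(t)(g-f)$ are both lower semicontinuous as suprema of continuous functions; since their sum $T(t)g$ is continuous at $x$, each summand is also upper semicontinuous at $x$, hence continuous there. You instead run a quantitative three-epsilon argument: the residual domination $k_n = (T(t)-T_n(t))f \le (T(t)-T_n(t))g = h_n$ (which is exactly the positivity of $(T(t)-T_n(t))(g-f)$ that underlies the paper's semicontinuity of $T(t)(g-f)$), the pointwise decay $h_n(x)\downarrow 0$, the continuity of $h_n$ at $x$, and the continuity of $T_n(t)f$. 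Both arguments are about the same length; the paper's is marginally slicker, while yours makes the locally uniform control of the error explicit and, as you note, transfers to the decreasing case by a single sign flip in the residuals, where the paper's version requires replacing lower by upper semicontinuity throughout.
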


\begin{proof}
	We give the proof for an increasing sequence of semigroups. In the decreasing case, the proof is -- mutatis mutandis -- analogous.
	
	(i) This is obvious. 
	
	(ii) Let $0\leq f \leq g \in \mathscr{J}_x(t)$. 
	By Hypothesis \ref{h.sf}(A) the functions $T_n(t)f$ are continuous, so $T(t)f = \sup_n T_n(t)f$ is lower semicontinuous as pointwise supremum of continuous functions. Thus, if $x_n \to x$ then $\liminf_{n \to \infty} T(t)f(x_n) \geq T(t)f(x)$. As $f\leq g$, we have $g-f\geq 0$ and it follows that
$T(t)(g-f) = \sup_n T_n(t)(g-f)$. Thus, also $T(t)(g-f)$ is lower semicontinuous. 
	If we again consider a sequence $x_n \to x$, then
	\[ 
		\liminf_{n \to \infty} \bigl [T(t)(g-f)\bigr ](x_n) \geq \bigl [T(t)(g-f)\bigr ](x).
	\]
	However, as $g\in \mathscr{J}_x(t)$, we have $T(t)g(x_n) \to T(t)g(x)$ and $\limsup_{n \to \infty} f(x_n) \leq f(x)$ follows. 
	Altogether, $f(x_n) \to f(x)$ and thus $f\in \mathscr{J}_x(t)$.

	(iii) This is immediate from (i) and (ii).
\end{proof}

Now we can proceed to our main result.

\begin{thm}
	\label{t.strongfeller}
	Let Hypothesis~\ref{h.sf} be satisfied. 
	Then $(T(t))_{t > 0}$ enjoys the strong Feller property.
\end{thm}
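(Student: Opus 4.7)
The strategy is to reduce, via Lemma~\ref{l.jx}(iii), to showing $T(t)\one \in C_b(E)$ for every $t > 0$, and to establish this by first proving that the pseudo-resolvent is strong Feller and then converting that information into continuity of $T(t)\one$.

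First, each $R_n(\lambda) = \int_0^\infty e^{-\lambda s}T_n(s)\,\ud s$ is itself strong Feller: continuity of $T_n(s)f$ (from Hypothesis~(A)) transfers to $R_n(\lambda)f$ by dominated convergence. An analogue of Lemma~\ref{l.jx}, applied to the set
\[
\mathscr{K}_x(\lambda) \coloneqq \{f \in B_b(E) : R(\lambda)f \text{ is continuous at } x\},
\]
then goes through: the monotone convergence $R_n(\lambda)f \to R(\lambda)f$ for $f \geq 0$ makes $R(\lambda)f$ lower (respectively upper) semicontinuous in the increasing (respectively decreasing) case, which gives the sandwich property. Hypothesis~(B) provides $\one \in \mathscr{K}_x(\lambda_0)$ for some $\lambda_0 > 0$, so $\mathscr{K}_x(\lambda_0) = B_b(E)$; the resolvent identity $R(\mu) = R(\lambda_0) + (\lambda_0 - \mu)R(\lambda_0)R(\mu)$ then propagates strong Feller to every $\mu > 0$.

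Second, commutativity gives $R(\lambda)T(t)\one = T(t)R(\lambda)\one \in C_b(E)$, and the unfolding $T(t)R(\lambda)\one(x) = e^{\lambda t}\int_t^\infty e^{-\lambda s}T(s)\one(x)\,\ud s$ shows that $x \mapsto \int_a^b e^{-\lambda s}T(s)\one(x)\,\ud s$ is continuous on $E$ for every $0 < a < b$ and $\lambda > 0$. Since $s \mapsto T(s)\one(x)$ is non-increasing (sub Markovian), averaging over $[a,t]$ bounds $T(t)\one(x_n)$ from above and averaging over $[t,b]$ bounds it from below by expressions continuous in $x$; passing $x_n \to x$ and shrinking the intervals then yields
\[
T(t+)\one(x) \;\le\; \liminf_n T(t)\one(x_n) \;\le\; \limsup_n T(t)\one(x_n) \;\le\; T(t-)\one(x).
\]
The lower endpoint collapses to $T(t)\one(x)$ via right-continuity of $s \mapsto T(s)\one(x)$: applying the $\sigma$-continuous $R(\lambda)$ to the monotone convergence $T(h)\one \uparrow \psi_0 \coloneqq \lim_{h \downarrow 0} T(h)\one$ and using the direct computation $T(h)R(\lambda)\one \to R(\lambda)\one$ as $h \downarrow 0$ yields $R(\lambda)\psi_0 = R(\lambda)\one$, hence $R(\lambda)(\one - \psi_0) = 0$. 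Via Laplace inversion together with the semigroup law this forces $T(s)(\one - \psi_0) = 0$ for every $s > 0$, so $T(t+)\one = T(t)\psi_0 = T(t)\one$.

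The main obstacle is tightening the upper bound $\limsup_n T(t)\one(x_n) \le T(t-)\one(x)$ to $\le T(t)\one(x)$. In the decreasing case this is free from upper semicontinuity of $T(t)\one = \inf_n T_n(t)\one$, and together with the right-continuity bound the sandwich closes. The increasing case is subtler: there only lower semicontinuity is available (which merely restates the lower bound), and closing the sandwich requires the left-continuity $T(t-)\one = T(t)\one$. I expect this to require either a left-sided analogue of the right-continuity identity that exploits the increasing monotone structure of the approximating semigroups, or a refined use of Dini's theorem applied to the uniform-on-compacts convergence $R_n(\lambda)\one \to R(\lambda)\one$ (both continuous) combined with a Post--Widder-type Laplace inversion as in Theorem~\ref{thm:domination-semigroups}.
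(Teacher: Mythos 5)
Your reduction to $T(t)\one$, your proof that $R(\mu)$ is strong Feller for every $\mu>0$ (via the $\mathscr{K}_x$-analogue of Lemma~\ref{l.jx} and the resolvent identity), and the resulting continuity of $x\mapsto\int_a^b e^{-\lambda s}T(s)\one(x)\,\ud s$ are all correct, and the sandwich
\[
T(t+)\one(x)\;\le\;\liminf_n T(t)\one(x_n)\;\le\;\limsup_n T(t)\one(x_n)\;\le\;T(t-)\one(x)
\]
is a genuinely different route from the paper's (which argues by contradiction, feeding a spatial discontinuity of $T(t_0)\one$ into Fatou's lemma for $R(\lambda)\one$). But the proof is not complete, and the gap is exactly where you stop. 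In the increasing case you need to upgrade $\limsup_n T(t)\one(x_n)\le T(t-)\one(x)$ to $\le T(t)\one(x)$, and both fixes you suggest are dead ends: the function $\phi_x(s)=T(s)\one(x)$ is merely non-increasing, and without any strong continuity assumption there is no reason for it to be left-continuous at the \emph{particular} time $t$ under consideration; Laplace inversion (Post--Widder or otherwise) can only ever give you $T(s-)\one(x)=T(s)\one(x)$ for \emph{almost every} $s$, i.e.\ outside the countable discontinuity set of $\phi_x$ --- which is also why your justification of ``$T(s)(\one-\psi_0)=0$ for every $s>0$'' does not go through as stated (you only get it for a.e.\ $s$). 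Incidentally, the whole right-continuity step is superfluous in the increasing case, since lower semicontinuity of $T(t)\one=\sup_n T_n(t)\one$ already gives $\liminf_n T(t)\one(x_n)\ge T(t)\one(x)$.

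The missing idea --- which is the heart of the paper's argument --- is to use the temporal monotonicity differently: do not try to prove continuity of $\phi_x$ at the given $t$, but combine (a) the fact that $\phi_x$ has at most countably many discontinuities, so that your sandwich \emph{does} close and yields $\one\in\mathscr{J}_x(s)$ for all $s$ outside a countable set $N_x$, with (b) the forward propagation of spatial continuity: if $\one\in\mathscr{J}_x(s)$, then $0\le T(u)\one\le\one$ and Lemma~\ref{l.jx}(ii) give $T(u)\one\in\mathscr{J}_x(s)$, hence $T(s+u)\one=T(s)T(u)\one$ is continuous at $x$, i.e.\ $\one\in\mathscr{J}_x(s+u)$. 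For arbitrary $t>0$ pick $s\in(0,t)\setminus N_x$ and apply (b) with $u=t-s$. With this addendum your argument becomes a complete and valid alternative; the paper runs the same two ingredients in contrapositive form, pushing an assumed discontinuity at $(x_0,t_0)$ down to a time where $\phi_{x_0}$ is continuous and then deriving the strict inequality $\liminf_n R(\lambda)\one(x_n)>R(\lambda)\one(x_0)$, contradicting Hypothesis~\ref{h.sf}(B).
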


\begin{proof}
	We only prove the theorem for a monotonically increasing sequence $(T_n)_{n\in \N}$; 
	the case of a decreasing sequence is similar. 
	Let us begin with some preliminary observations. 
	As $(T(t))_{t > 0}$ is sub Markovian, the orbit $t\mapsto T(t)\one$ is monotonically decreasing. 
	This has two important consequences: 
	
	(i) If we have $\one\in \mathscr{J}_x(t)$ for some $x \in E$ and $t > 0$ then, by Lemma~\ref{l.jx}, we also have $T(s)\one\in \mathscr{J}_x(t)$ for all $s>0$; by the semigroup law, this implies $\one \in \mathscr{J}_x(t+s)$. 
	Thus, by contraposition, if $\one\not\in \mathscr{J}_x(t)$, then $\one\not\in \mathscr{J}_x(s)$ for all $s\in (0,t)$.
	
	(ii) For fixed $x\in E$ the scalar function $t\mapsto T(t)\one(x)$ is also monotonically decreasing, so it has at most countably many discontinuities.\smallskip
	
	To prove the theorem, by Lemma \ref{l.jx} it suffices to prove that $\one \in \mathscr{J}_x(t)$ for all $x\in E$ and $t\in (0,\infty)$. 
	Aiming for a contradiction, let us assume that $\one\not\in \mathscr{J}_{x_0}(t_0)$ for a point $x_0 \in E$ and a time $t_0 > 0$. 
	It follows from points~(i) and~(ii) above that, by making $t_0$ smaller if necessary, we can achieve that $t\mapsto T(t)\one (x_0)$ is continuous at $t_0$.
	As $T(t_0)\one$ is the pointwise supremum of the continuous functions $T_n(t_0)\one$, where $n$ runs through $\N$, it follows that $T(t_0)\one$ is lower semicontinuous. 
	Thus, since $\one\not\in \mathscr{J}_{x_0}(t_0)$ we find a number $\eps>0$ and a sequence $x_n\to x_0$ with	$T(t_0)\one(x_n) \geq T(t_0)\one (x_0) + 2\eps$ for all $n\in \N$.
	
	By monotonicity with respect to time, $T(s)\one (x_n) \geq T(t_0)\one (x_n) \geq T(t_0)\one (x_0) + 2\eps$ for all $s\in (0, t_0)$ and all $n \in \bbN$.
	Since $t\mapsto T(t)\one (x_0)$ is continuous at $t_0$, we can pick
	$\delta >0$ such that $t_0-\delta >0$ and $T(t)\one (x_0) \leq T(t_0)\one (x_0) + \eps$ for all $t\in [t_0-\delta, t_0]$. 
	It follows that $T(t)\one (x_n) \geq T(t)\one (x_0) + \eps$ for all $t\in [t_0-\delta, t_0]$ and all $n \in \bbN$. 
	So to sum up we have
	\begin{align*}
		\liminf_{n \to \infty} T(t)\one (x_n) \geq 
		\begin{cases}
			T(t)\one (x_0) + \eps \quad & \text{for } t \in [t_0-\delta, t_0], \\
			T(t)\one (x_0) \quad & \text{for all other } t > 0,
		\end{cases}
	\end{align*}
	where the inequality in the second case follows from the lower semicontinuity of $T(t)\one$.
	
	Now consider $\lambda$ as in Hypothesis~\ref{h.sf}(B). 
	Using Fatou's Lemma, we find
	\begin{align*}
		\liminf_{n \to \infty} R(\lambda)\one (x_n) & \geq  \int_0^\infty e^{-\lambda t} \liminf_{n \to \infty} T(t)\one (x_n)\, \ud t\\
		& \geq \int_0^\infty e^{-\lambda t} T(t)\one (x_0)\, dt + \int_{t_0-\delta}^{t_0} e^{-\lambda t}\eps\, \ud t\\
		& > R(\lambda) \one (x_0).
	\end{align*}
	This contradicts the assumed continuity of $R(\lambda)\one$.
\end{proof}

\begin{rem}
	Our proof of Theorem~\ref{t.strongfeller} was inspired by the argument presented for the implication `(vii) $\Rightarrow$ (ii)' in \cite[Thm. 3.2]{sch98}.
	We note, however, that this implication in \cite[Thm. 3.2]{sch98} is not correct in the form stated there;
	this was kindly confirmed to us by the author of \cite{sch98}.
	
	Let us briefly provide a counterexample on $E = (0,\infty)$. 
	This can easily be transferred to $\R$ (and thus fits into the framework of \cite{sch98}) via any homeomorphism $(0,\infty) \to \R$.
	For $x,t > 0$, let
	\[
		T(t)f(x) = 
		\begin{cases}
			f(x-t) & \mbox{ for } x> t\\
			0 & \mbox{ for } x\leq t.
		\end{cases}
	\]
	Then $T(t)C_b(E) \not\subseteq C_b(E)$, while the pseudo-resolvent of $T$ is given by
	\[
		R(\lambda)f(x) = e^{-\lambda x} \int_0^x e^{\lambda s} f(s)\, \ud s
	\]
	for $x,\re \lambda > 0$, and thus leaves $C_b(E)$ invariant.
	Note that the semigroup $T$ leaves $C_0(E)$  invariant, and thus satisfies all requirements of the semigroups considered in~\cite{sch98}.
	
	Note that a closely related semigroup appears as a limit semigroup in our Example~\ref{ex.cbnotgood} below.
\end{rem}

In the situation of Theorem \ref{t.strongfeller}, it follows from Lemma \ref{l.sfcontinuous} that for the limit semigroup $T$ the map 
$(t,x) \mapsto T(t)f(x)$ is continuous on $(0,\infty)\times \Omega$ for every $f\in B_b(E)$, 
whenever $R(\lambda)$ acts injectively on $C_b(E)$.
We now address the question of continuity at time $t=0$. 
To this end, we make the following assumption:

\begin{hyp}\label{h.exhaust}
	There exists a sequence $(K_n)_{\in \N}$ of compact sets such that 
	\[\bigcup_{n\in \N} K_n = E\] 
	and $T_n(t)\one (x) \to 1$ as $t\to 0$ for every $n \in \N$ and every $x\in K_n$.
\end{hyp}

Our first result concerning continuity at $0$ does not require the strong Feller property:

\begin{prop}\label{p.contzero}
	Assume that we are in the situation of Theorem~\ref{t.limit-semigroup} for $M=1$ (i.e.,\ all semigroups $T_n$ are sub Markovian) and that the equivalent assertions {\upshape(i)} and {\upshape(ii)} in that theorem are satisfied for the case of increasing semigroups and pseudo-resolvents. 
	Moreover, let Hypothesis~\ref{h.exhaust} be satisfied.
	Then, for all $f\in C_b(E)$, we have $T(t)f \to f$ uniformly on compact sets as $t\to 0$. 
\end{prop}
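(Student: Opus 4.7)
The plan is to reduce first to the case $f = \one$ via a standard decomposition and then to handle the $\one$ case using the monotone structure of the semigroup and a Dini-type argument. For pointwise convergence of $T(t)\one$ to $\one$ (Step~1), pick for each $x \in E$ an index $n$ with $x \in K_n$; the monotonicity $T_n \le T$ and sub-Markovianity of $T$ give the sandwich $T_n(t)\one(x) \le T(t)\one(x) \le 1$, and the hypothesis $T_n(t)\one(x) \to 1$ then forces $T(t)\one(x) \to 1$.

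\textbf{Step 2 (uniform convergence for $\one$).} The orbit $t \mapsto T(t)\one$ is pointwise decreasing in $t$, since $T(t+s)\one = T(t)T(s)\one \le T(t)\one$ by $T(s)\one \le \one$. Hence $1 - T(t)\one$ decreases pointwise to $0$ as $t \to 0^+$. Moreover, $T(t)\one$ is lower semicontinuous, being the pointwise supremum $\sup_n T_n(t)\one$ of continuous functions (under the mild Feller-type assumption that each $T_n$ sends $\one$ to $C_b(E)$, which holds in the applications of this proposition). Therefore the sets $A_\eps(t) := \{x \in K : 1 - T(t)\one(x) \ge \eps\}$ are closed subsets of the compact $K$, decrease as $t \to 0^+$, and have empty intersection by Step~1; compactness yields $A_\eps(t_0) = \emptyset$ for some $t_0 > 0$, giving uniform convergence $T(t)\one \to \one$ on $K$.

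\textbf{Step 3 (extension to general $f$).} Fix $f \in C_b(E)$, a compact $K \subseteq E$, and $\eps > 0$. For $x \in K$, decompose
\begin{equation*}
T(t)f(x) - f(x) = T(t)\bigl(f - f(x)\one\bigr)(x) + f(x)\bigl(T(t)\one(x) - 1\bigr).
\end{equation*}
The second summand tends to $0$ uniformly on $K$ by Step~2. For the first, use uniform continuity of $f$ on a compact neighborhood of $K$ to pick $\delta > 0$ such that $|f(y) - f(x)| < \eps$ whenever $x \in K$ and $d(y, x) < \delta$; splitting the integral defining $T(t)\abs{f - f(x)\one}(x)$ over the ball $B(x, \delta)$ and its complement gives the bound $\eps + 2 \norm{f}_\infty \cdot T(t)\one_{B(x, \delta)^c}(x)$.

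\textbf{Main obstacle.} The central difficulty is the uniform control of the ``kernel escape'' $T(t)\one_{B(x, \delta)^c}(x)$ for $x \in K$ as $t \to 0^+$. My plan is to cover $K$ by finitely many balls $B(x_i, \delta/2)$: for $x \in B(x_i, \delta/2)$, $B(x, \delta)^c \subseteq B(x_i, \delta/2)^c$, so the estimate reduces to the uniform convergence $T(t)\one_{B(x_i, \delta/2)^c}(x) \to 0$ for $x$ in the closure of $B(x_i, \delta/2)$. This is then established by a Dini-style argument analogous to Step~2 applied to $t \mapsto T(t)\one_{B(x_i, \delta/2)}$, exploiting the lower semicontinuity of $\one_U$ for $U$ open (and hence of $T(t)\one_U$ under the Feller-type hypothesis).
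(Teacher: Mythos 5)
Your Steps 1 and 2 reproduce the paper's core argument for the function $\one$: pointwise convergence via the sandwich $T_{n_0}(t)\one(x)\le T(t)\one(x)\le 1$ for $x\in K_{n_0}$, followed by a Dini argument using the monotonicity of $t\mapsto T(t)\one$ and the lower semicontinuity of $T(t)\one$ (which, as you correctly flag, needs each $T_n(t)\one$ to be continuous). So far you agree with the paper, except that the paper performs the reduction to $f=\one$ \emph{first} and therefore only ever has to run Dini for the single, time-monotone orbit $t\mapsto T(t)\one$.

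The genuine gap is your Step 3. The escape-mass estimate $T(t)\one_{B(x,\delta)^c}(x)\to 0$ does not follow from $T(t)\one\to\one$, and your proposed fix --- a ``Dini-style argument applied to $t\mapsto T(t)\one_U$'' --- cannot be executed: that map is not monotone in $t$ (sub-Markovianity gives monotonicity only for the orbit of $\one$), and Dini needs a pointwise limit as input, whereas neither Hypothesis~\ref{h.exhaust} nor anything you have established says anything about $T_n(t)\one_U$ or $T(t)\one_U$. Indeed, the statement $T(t)\one_{B(x,\delta)^c}(x)\to 0$ is exactly the concentration $k_t(x,\cdot)\to\delta_x$ that the proposition is ultimately about, and it is strictly stronger than $k_t(x,E)\to 1$: for the time-independent Markov semigroup $T(t)f=f(x_0)\one$ on a two-point space one has $T(t)\one=\one$, yet no mass ever concentrates near $x\ne x_0$. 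This shows that no argument based solely on the behaviour of $T(t)\one$ can control the escape mass; one must re-use the approximating semigroups in the reduction step itself. That is what the paper does: it deduces the general case from the case $f=\one$ by the order-theoretic device of Lemma~\ref{l.jx}, sandwiching $0\le f\le \norm{f}_\infty\one$ and using that $T(t)f$ and $T(t)(\norm{f}_\infty\one-f)$ each satisfy a one-sided $\liminf$-inequality at $t=0$ (inherited from $T(t)g\ge T_{n}(t)g$ for $g\ge 0$ together with Hypothesis~\ref{h.exhaust}), while their sum $\norm{f}_\infty T(t)\one$ converges; adding the two inequalities and comparing with the convergent sum forces convergence of each summand. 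To repair your proof, replace the kernel-splitting of Step 3 by this comparison argument.
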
 

\begin{proof}
	We follow the lines of \cite[Lem.\ 3.1]{sch98}. 
	With similar arguments as in Lemma \ref{l.jx}, we see that it suffices to prove 
	that $T(t)\one \to \one$ uniformly on compact sets as $t\to 0$. 
	Actually, by Dini's theorem, it suffices to establish pointwise convergence.
	So fix $x\in E$ and $\eps >0$. 
	By Hypothesis~\ref{h.exhaust}, we find $n_0\in \N$ with $x\in K_{n_0}$. 
	Moreover, $T_{n_0}(t)\one(x) \to 1$ as $t\to 0$, also by Hypothesis~\ref{h.exhaust}. 
	
	Consequently, there is a number $\delta >0$ with $T_{n_0}(t)\one(x) \geq 1-\eps$ for all $t\in [0,\delta]$. 
	It follows that $1\geq T(t)\one(x)\geq T_{n_0}(t)\one(x) \geq 1-\eps$ for all $t\in [0,\delta]$, proving that indeed $T(t)\one(x)\to 1$ as $t \to \infty$.
\end{proof}

We should point out that for \emph{decreasing} sequences, we cannot expect continuity of the limit semigroup at $0$. Indeed, already for 
$E=\{0\}$, i.e.,\ $B_b(E) \simeq \cM_b(E) \simeq \R$, the semigroups $T_n$ given by $T_n(t) = e^{-nt}$ are monotonically decreasing to the zero semigroup $0$ on $(0,\infty)$, which does not converge to $1$ as $t\to 0$.

In the situation of Proposition \ref{p.contzero}, it is natural to extend the semigroup $T$ which, up to now, is only defined on $(0,\infty)$ to
$[0,\infty)$ by setting $T(0) = I$. Then Proposition \ref{p.contzero} states that this extended semigroup is $\sigma$-continuous at $0$. In the following result, we also set $T_n(0)=I$ for the approximative semigroups $T_n$. We note that, in general, this is \emph{not} $\sigma$-continuous 
at $0$ (as typically $T_n(t)f(x) \equiv 0$ for $x\in E\setminus K_n$). However, repeating the arguments from Lemma \ref{eq.jx}, we see that Hypothesis \ref{h.exhaust} yields $T_n(t)f(x) \to f(x)$ as $t\to 0$ for every $x\in K_n$.

\begin{cor}\label{c.cz1}
	Assume that Hypothesis~\ref{h.sf} is satisfied with option {\upshape(1)} and that Hypothesis~\ref{h.exhaust} is satisfied.
	Then $T$ is a $C_b$-Feller semigroup and for every $f\in C_b(E)$ we have $T_n(t)f (x) \to T(t)f(x)$, 
	uniformly for $(t,x)$ in compact subsets of $[0,\infty)\times E$. 
	Moreover, the $C_b$-generator $A_{C_b}$ of $T$ can be characterized in terms of the full generators $A_n$ of $T_n$ as follows: 
	for all $u,f \in C_b(E)$ we have
	\begin{align*}
		& u \in D(A_{C_b}) \quad \text{and} \quad A_{C_b}u=f \\
		\Leftrightarrow \qquad
		& \exists (u_n,f_n) \in A_n\cap C_b(E)\times C_b(E) : u_n \weak u, f_n \weak f.
	\end{align*}
	In the second line, we can equivalently require that the sequences $u_n$ and $f_n$ be uniformly bounded 
	and converge uniformly on compact sets to $u$ and $f$, respectively.
\end{cor}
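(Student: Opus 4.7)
The proof splits into three parts: (i) establishing that $T$ is $C_b$-Feller; (ii) proving uniform convergence $T_n(t)f \to T(t)f$ on compact subsets of $[0,\infty) \times E$; and (iii) characterizing the $C_b$-generator $A_{C_b}$ via the full generators $A_n$. For (i), the invariance $T(t)C_b(E) \subseteq C_b(E)$ is immediate from the strong Feller property of $T$ established in Theorem~\ref{t.strongfeller} (which even gives $T(t)B_b(E) \subseteq C_b(E)$). For stochastic continuity at $t = 0$, Proposition~\ref{p.contzero} yields $T(t)f \to f$ uniformly on compact subsets of $E$ as $t \to 0$ for every $f \in C_b(E)$; combined with the sub Markovianity bound $\|T(t)f\|_\infty \leq \|f\|_\infty$, this pointwise convergence lifts to $T(t)f \weak f$.

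For (ii), I would first establish joint continuity of $(t,x) \mapsto T(t)f(x)$ on $[0,\infty) \times E$ for $f \in C_b(E)$. On $(0,\infty) \times E$ this follows from Lemma~\ref{l.sfcontinuous} (applicable because $T$ is $C_b$-Feller by part~(i), whence $R(\lambda)|_{C_b(E)}$ is injective), and continuity at $t = 0$ is a straightforward consequence of Proposition~\ref{p.contzero} together with continuity of $f$. Reducing to $f \in C_b(E)_+$ via $f = f_+ - f_-$, the sequence $T_n(t)f$ converges monotonically and pointwise on $[0,\infty) \times E$ to the jointly continuous limit $T(t)f$, and the uniform convergence on compacts should then follow by a Dini-type argument. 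I expect this to be the main technical obstacle: each $T_n(t)f$ is known to be continuous in $x$ for fixed $t > 0$ (from Hypothesis~\ref{h.sf}(A)), but joint continuity in $(t,x)$ is not automatic from the hypotheses, so the classical Dini theorem does not apply verbatim. My intended remedy is to combine Dini's theorem for fixed $t$ (delivering uniform convergence on compacts of $E$) with a time-monotonicity argument in the style of Theorem~\ref{t.strongfeller}, exploiting that for each fixed $x$ the map $t \mapsto T_n(t)\one(x)$ is monotone decreasing, so that its pointwise convergence to the continuous limit $T(t)\one(x)$ is automatically uniform on compact intervals; one then bootstraps from $f = \one$ to general $f \in C_b(E)_+$ via an equicontinuity/covering argument on compacts of $E$ that uses joint continuity of the limit.

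For (iii), I would apply Corollary~\ref{c.generatorconv}. Given $(u,f) \in A_{C_b}$, set $g := u - f \in C_b(E)$, $u_n := R_n(1)g$, and $f_n := u_n - g$; then $(u_n, f_n) \in A_n$ by construction, and the strong Feller property of $T_n$ (Hypothesis~\ref{h.sf}(A)) ensures that $R_n(1)$ maps $B_b(E)$ into $C_b(E)$, so both $u_n$ and $f_n$ lie in $C_b(E)$. The convergences $u_n \weak u$ and $f_n \weak f$ are obtained exactly as in the proof of Corollary~\ref{c.generatorconv}. The converse direction is immediate from Corollary~\ref{c.generatorconv} combined with $u, f \in C_b(E)$. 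For the equivalent formulation with uniform boundedness and uniform convergence on compacts of $E$, the same construction gives $\|u_n\|_\infty \leq \|g\|_\infty$ uniformly in $n$, and since $R_n(1)g \uparrow R(1)g = u$ is a monotone pointwise convergence of continuous functions to a continuous function on $E$, Dini's theorem yields uniform convergence on compact subsets of $E$; the uniform convergence $f_n \to f$ on compacts then follows directly from $f_n = u_n - g$.
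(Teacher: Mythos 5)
Parts (i) and (iii) of your proposal coincide with the paper's argument: the $C_b$-Feller property is obtained from Theorem~\ref{t.strongfeller} together with Proposition~\ref{p.contzero}, and the generator characterization is read off from (the proof of) Corollary~\ref{c.generatorconv}; your observations that $R_n(1)$ is strong Feller (so that $u_n, f_n\in C_b(E)$) and that Dini's theorem, applied after splitting into positive and negative parts, gives the addendum are exactly what is needed there.

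The gap is in part (ii), and you have correctly located it but not closed it. Your worry is legitimate: under Hypothesis~\ref{h.sf} the functions $(t,x)\mapsto T_n(t)f(x)$ are not known to be jointly continuous (Lemma~\ref{l.sfcontinuous} would require injectivity of $R_n(\lambda)|_{C_b(E)}$, which is not assumed for the $T_n$), and at $t=0$ they are in general not even jointly lower semicontinuous outside $K_n$ --- the paper itself remarks that $T_n$ is typically not stochastically continuous there. So Dini's theorem on a compact subset of $[0,\infty)\times E$ does not apply verbatim. Your proposed remedy, however, does not assemble into a proof: uniformizing in $t$ for fixed $x$ produces the error $\phi_n(x)=\sup_{t\le T}\bigl(T(t)\one(x)-T_n(t)\one(x)\bigr)$, which, being a supremum, is only \emph{lower} semicontinuous in $x$ --- the wrong direction for a second application of Dini --- and the concluding ``equicontinuity/covering bootstrap from $\one$ to general $f$'' is not substantiated. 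The efficient repair is to \emph{first} reduce to $f=\one$ via the domination $0\le (T(t)-T_n(t))f\le \norm{f}_\infty\,(T(t)-T_n(t))\one$, valid for $f\ge 0$ because $T(t)-T_n(t)$ is a positive operator, and \emph{then} run a pointwise covering argument for $\one$: given $(t_0,x_0)$ and $\eps>0$, choose $t_1>t_0$ with $T(t_1)\one(x_0)\ge T(t_0)\one(x_0)-\eps/3$ (continuity in $t$, including at $t_0=0$ by Proposition~\ref{p.contzero}), then $N$ with $T_N(t_1)\one(x_0)\ge T(t_1)\one(x_0)-\eps/3$, and use the continuity of $T_N(t_1)\one$ in $x$ together with the monotonicity $T_N(t)\one\ge T_N(t_1)\one$ for $t\le t_1$ to obtain $T_N(t)\one(x)\ge T(t_0)\one(x_0)-\eps$ on a neighbourhood $[0,t_1)\times V$ of $(t_0,x_0)$; joint continuity of $(t,x)\mapsto T(t)\one(x)$, monotonicity in $n$ and compactness then conclude. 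This uses only the ingredients you already have (spatial continuity of each $T_N(s)\one$ and monotonicity in $t$), but in the correct order.
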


\begin{proof}
	By Theorem \ref{t.strongfeller}, $T$ enjoys the strong Feller property (thus, in particular, $C_b(E)$ is invariant under $T$), 
	and Proposition \ref{p.contzero} yields stochastic continuity of $T$, so $T$ is a $C_b$-Feller semigroup. 
	In view of \cite[Thm.\ 2.10]{k09}, the stochastic continuity implies that the operators in the pseudo-resolvent $(R(\lambda))_{\Re\lambda >0}$ are injective on $C_b(E)$.
	Thus, by Lemma~\ref{l.sfcontinuous} and Proposition~\ref{p.contzero}, for $f\in C_b(E)$ the map $(t,x)\mapsto T(t)f(x)$ is continuous on $[0,\infty)\times E$. 
	The claimed uniform convergence on compact sets of the semigroups follows for positive functions from Dini's theorem. 
	The general case follows by splitting a general function into positive and negative part.
	
	As $A_n\cap C_b(E)\times C_b(E)$ is the generator of the restricted semigroup $T_n|_{C_b(E)}$,
	the convergence of the generators can be established as in the proof of Corollary~\ref{c.generatorconv}. 
	For the addendum we note that, by Dini's theorem, 
	we actually have $R_n(1)f \to R(1)f$ uniformly on compact sets for every $f\in C_b(E)$ (even $f\in B_b(E)$).
\end{proof}

We close this section we a note on the behaviour of the limit semigroup on the space $C_0(E)$ which denotes, 
in case that the Polish space $E$ is locally compact, the space of continuous functions on $E$ that vanish at infinity.

\begin{cor}\label{c.cz2}
	Suppose that the Polish space $E$ is locally compact.
	Assume that Hypothesis~\ref{h.sf} is satisfied with option {\upshape(1)}, 
	that Hypothesis~\ref{h.exhaust} is satisfied and that $T(t)C_0(E)\subset C_0(E)$ for all $t>0$. 
	Then $T(t)f \to f$ as $t \to 0$ for every $f\in C_0(E)$, 
	and we have $T_n(t)f\to T(t)f$ with respect to $\|\cdot\|_\infty$, uniformly for $t$ in compact subsets of $[0,\infty)$.
\end{cor}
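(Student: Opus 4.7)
The plan is to prove part~(a)---strong continuity of $T$ on $C_0(E)$ at $t=0$---first, and then derive part~(b) from it via an equi-tightness argument. For~(a), set
$V \coloneqq \{f \in C_0(E) : \norm{T(t)f - f}_\infty \to 0 \text{ as } t \to 0^+\}$.
Since each $T(t)$ is sub-Markovian, it is a sup-norm contraction, so $V$ is a closed subspace of $C_0(E)$. I will show that $V$ contains the subspace $R(\lambda) C_0(E)$ for any fixed $\lambda > 0$ and that the latter is dense in $C_0(E)$.

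First, the inclusion $R(\lambda) C_0(E) \subseteq C_0(E)$ follows from the strong Feller property (continuity of $R(\lambda)f$ by dominated convergence applied to the integral $R(\lambda)f(x) = \int_0^\infty e^{-\lambda t} T(t)f(x) \dx t$) together with the hypothesis $T(t) C_0(E) \subseteq C_0(E)$: for any sequence $x_n$ leaving every compact set and any $f \in C_0(E)$, we get $T(t)f(x_n) \to 0$ for each $t > 0$, and another DCT yields $R(\lambda)f(x_n) \to 0$. To see that $R(\lambda)g \in V$ for $g \in C_0(E)$, use the pointwise identity
\[
T(t)R(\lambda)g = e^{\lambda t}\int_t^\infty e^{-\lambda r} T(r)g \dx r,
\]
obtained by pulling $T(t)$ inside the Laplace-type integral (rigorously, pair against $\delta_x$ and use $T(t)T(s) = T(t+s)$) and substituting $r = t+s$. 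Subtracting $R(\lambda)g$ and estimating the two resulting pieces yields
\[
\norm{T(t)R(\lambda)g - R(\lambda)g}_\infty \leq \tfrac{1-e^{-\lambda t}}{\lambda}\norm{g}_\infty + t\norm{g}_\infty \xrightarrow{t \to 0^+} 0.
\]

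For density of $R(\lambda) C_0(E)$ in $C_0(E)$, I use the Riesz duality $C_0(E)^* = \cM_b(E)$ (which requires local compactness of $E$) and Laplace uniqueness. If $\mu \in \cM_b(E)$ annihilates the range, then Fubini gives $\int_0^\infty e^{-\lambda t} \dual{T(t)g, \mu}\dx t = 0$ for every $g \in C_0(E)$ and $\lambda > 0$; uniqueness of Laplace transforms forces $\dual{T(t)g, \mu} = 0$ for almost every $t > 0$. Letting $t \to 0^+$ along such times, using $T(t)g \to g$ pointwise from Proposition~\ref{p.contzero} combined with dominated convergence against the finite measure $\mu$, gives $\dual{g, \mu} = 0$ for all $g \in C_0(E)$, hence $\mu = 0$. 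Combined with the inclusions above, this proves~(a).

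For~(b), the contractive estimate $\norm{T(t)f - T(s)f}_\infty \leq \norm{T(\abs{t-s})f - f}_\infty$ (from the semigroup law and sub-Markovianity) combined with~(a) shows that $t \mapsto T(t)f$ is continuous from $[0, \infty)$ into $(C_0(E), \norm{\cdot}_\infty)$. Hence the image of any compact interval $[0,t_1]$ is compact in $C_0(E)$, and thus equi-vanishing at infinity: for any $\eta > 0$ there is a compact $K \subseteq E$ with $\sup_{t \in [0,t_1]}\sup_{x\notin K}\abs{T(t)f(x)} < \eta$. For $f \geq 0$, the monotonicity $T_n(t) \leq T(t)$ gives $0 \leq T(t)f - T_n(t)f \leq T(t)f < \eta$ outside $K$, uniformly in $n$ and $t \in [0, t_1]$, while on $[0,t_1] \times K$ Corollary~\ref{c.cz1} yields uniform convergence $T_n(t)f \to T(t)f$. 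Splitting a general $f \in C_0(E)$ into positive and negative parts finishes~(b). The main obstacle is~(a): locally uniform convergence from Corollary~\ref{c.cz1} is insufficient to control the sup-norm globally on $E$, and bridging this gap demands the resolvent/duality detour above, which uses both $T(t) C_0(E) \subseteq C_0(E)$ and the Riesz representation theorem essentially.
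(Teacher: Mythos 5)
Your proof is correct, and its second half (the uniform convergence of $T_n(t)f$ to $T(t)f$) is exactly the paper's argument: strong continuity of the orbit map on $[0,t_1]$ gives a compact, hence equi-vanishing, orbit in $C_0(E)$; outside the resulting compact set $K$ one uses the domination $0 \le T(t)f - T_n(t)f \le T(t)f$, and on $K$ the locally uniform convergence from Corollary~\ref{c.cz1}. The only genuine difference is in the first half. The paper observes that $(T(t)|_{C_0(E)})_{t\ge 0}$ is weakly continuous (pointwise convergence plus boundedness yields convergence against every $\mu \in \cM_b(E) = C_0(E)^*$) and then simply cites the classical theorem that a weakly continuous one-parameter semigroup on a Banach space is strongly continuous \cite[Thm.\ I.5.8]{en}. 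You instead reprove that fact in situ: you show the set $V$ of vectors with norm-continuous orbits at $0$ is closed, contains $R(\lambda)C_0(E)$ via the identity $T(t)R(\lambda)g = e^{\lambda t}\int_t^\infty e^{-\lambda r}T(r)g\,\ud r$, and that this range is dense by a Hahn--Banach/Riesz/Laplace-uniqueness argument feeding in Proposition~\ref{p.contzero}. This is essentially the standard proof of the cited theorem, specialized to the present setting; it buys self-containedness at the cost of length, and requires the small extra observations that $R(\lambda)C_0(E) \subseteq C_0(E)$ (which you supply) and that the ranges $R(\lambda)C_0(E)$ are independent of $\lambda$ (needed to pass from a functional annihilating one range to the vanishing of all Laplace transforms; this follows from the resolvent identity and is worth a sentence). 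Neither point is a gap, only a detail to make explicit.
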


\begin{proof}
	If we set $T(0) = I$, then the semigroup $(T(t)|_{C_0(E)})_{t \in [0,\infty)}$ is, by Corollary~\ref{c.cz1}, weakly continuous; 
	hence it is strongly continuous (see \cite[Thm.\ I.5.8]{en}). 
	
	Next, fix $T>0$ and $f\in C_0(E)$. 
	Given $\eps>0$, by the strong continuity and the compactness of $[0,T]$, 
	we find a compact set $K$ such that $|T(t)f(x)|\leq \eps$ (and thus $|T(t)f(x) - T_n(t)f(x)| \leq 2\eps$) 
	for all $t\in [0,T]$, $n \in \N$ and $x\in E\setminus K$. 
	By Corollary \ref{c.cz1}, there is $n_0$ with $|T(t)f(x)-T_n(t)f(x)|\leq 2\eps$ for all $n\geq n_0$, $t\in [0,T]$ and $x\in K$ 
	so that $\|T(t)f-T_n(t)f\|_\infty\leq 2\eps$ for all $n\geq n_0$ and $t\in [0,T]$. 
\end{proof}

\section{Counterexamples}
\label{sec.counterexamples}

Clearly, Hypothesis~\ref{h.sf}(A) cannot be omitted in Theorem~\ref{t.strongfeller}.
As a simple counterexample, one can take $E$ to be the complex unit circle and $T_n = T$ to be the shift semigroup on $B_b(E)$ for each $n$. 
Then the pseudo resolvent of $T$ does not only map $\one$ to a constant function, it is even strongly Feller. 
Still, the semigroup $T$ is not strongly Feller.

The following example shows a slightly more involved phenomenon: here, the pseudo resolvent of the limit semigroup is also strongly Feller, but the limit semigroup does not even leave $C_b(E)$ invariant (in contrast to the approximating semigroups).

\begin{example}\label{ex.cbnotgood}
	Let $E= (0,1]$ and define for each $n\in \N$ the semigroup $(T_n(t))_{t > 0}$ on $B_b(E)$ by setting
	\[
		T_n(t)f(x) = 
		\begin{cases}
			\Big(\frac{x-t}{x}\Big)^{\frac{1}{n}}f(x-t) & \mbox{ for } x> t\\
			0 & \mbox{ for } x\leq t.
		\end{cases}
	\]
	It is easy to see that $(T_n(t))_{t > 0}$ is a sub Markovian semigroup of kernel operators. Moreover, $T_n(t)C_b(E) \subseteq C_b(E)$ (actually, $T_n(t)C_b(E) \subseteq C_0(E)$) but $(T_n(t))_{t > 0}$ does not enjoy the strong Feller property. Note that for fixed $x>t$ we have $(x-t)/x \in (0,1)$ and hence $\sqrt[n]{(x-t)/x} \uparrow 1$ as $n \to \infty$. 
	Consequently, the semigroups $T_n$ are monotonically increasing to the semigroup $T$ given by
	\[
		T(t)f(x) = 
		\begin{cases}
			f(x-t) & \mbox{ for } x> t\\
			0 & \mbox{ for } x\leq t.
		\end{cases}
	\]
	Note that $T(t)C_b(E) \not\subseteq C_b(E)$ even though the pseudo-resolvent $(R(\lambda))_{\Re \lambda >0}$ of $T$ is given by
	\[
		R(\lambda)f(x) = e^{-\lambda x} \int_0^x e^{\lambda s} f(s)\, \ud s,
	\]
	and thus satisfies $R(\lambda) B_b(E)\subseteq C_b(E)$ (even $R(\lambda) B_b(E)\subseteq C_b(E)$) for all $\Re\lambda >0$, which means that Hypothesis~\ref{h.sf}(B) is satisfied.
\end{example}

We point out that if the limit semigroup $T$ enjoys the strong Feller property (or if merely $T(t)\one$ is continuous for every $t>0$)
then Hypothesis \ref{h.sf}(B) is necessarily satisfied. Our second example shows that  Hypothesis~\ref{h.sf}(B) cannot be dropped 
in Theorem~\ref{t.strongfeller}.

\begin{example} \label{ex.loose-strong-feller}
	Let $d\geq 3$ and put $\Omega_n\coloneqq \R^d\setminus B(0, 1/n)$. 
	We let $T_n$ be the semigroup generated by the Dirichlet Laplacian on $\Omega_n$ and $R_n = (R_n(\lambda))_{\re \lambda > 0}$ be the Laplace transform of that semigroup. 
	Here, we consider functions on $\Omega_n$ as functions on all of $\R^d$, extending them by zero outside of $\R^d$. 
	By the Dirichlet boundary conditions and the regularity of $\Omega_n$ 
	it follows that the semigroup $T_n$ and the pseudo resolvent $R_n$ consist of strong Feller operators. 
	As a consequence of the maximum principle, 
	the resolvents $R_n(\lambda)$ (for $\lambda >0$) and the semigroups $T_n$ are monotonically increasing in $n$. 
	We write $T(t)f = \sigma\text{-}\lim_n T_n(t)f$ for $t > 0$ and for $f\in B_b(\R^d)$;
	it follows from Theorem~\ref{t.limit-semigroup} that $T$ is a sub Markovian semigroup of kernel operators on $B_b(\R^d)$
	and that its pseudo resolvent $R$ satisfies $R(\lambda)f = \sigma\text{-}\lim_n R_n(\lambda) f$ whenever $f \in B_b(\R^d)$ and $\re \lambda > 0$.
	
	Now, fix $\lambda > 0$.
	By \cite[Prop.\ 3.3]{ad07}, $R_n(\lambda)f$ converges uniformly on compact subsets of $\R^d\setminus\{0\}$ to the resolvent of the Dirichlet Laplacian on $\R^d\setminus\{0\}$ applied to $f$ whenever $f \in B_b(\R^d)$. 
	Moreover, as $d\geq 3$, singletons have zero capacity whence $\Delta_{\R^d} = \Delta_{\R^d\setminus \{0\}}$, cf.\ \cite[Cor.\ 3.9]{a02}. 
	So it follows that $R(\lambda)\one(x) = 1/\lambda$ for all $x \in \R^d \setminus \{0\}$.
	
	However, we have $R_n(\lambda)\one(0) = 0$ for all $n$, so that $R(\lambda)f(0) = 0$. 
	We thus conclude that $R(\lambda)\one = \lambda^{-1}\one_{\R^d\setminus\{0\}}$ is not continuous in $0$,
	so Hypothesis~\ref{h.sf}(B) is not satisfied.
	
	As for the semigroup $T$, it is clear that $T(t)\one (0) = 0$ for all $t>0$. 
	For points $x\neq 0$, the uniqueness theorem for Laplace transforms shows that $T(t)\one (x) = 1$ for almost all $t>0$.
	But since $t\mapsto T(t)\one (x)$ is monotonically decreasing, 
	we must have $T(t)\one(x) = 1$ for all $x\neq 0$ and $t > 0$.
	This proves that $T$ does not enjoy the strong Feller property.
\end{example}

\section{Application: Elliptic operators with unbounded coefficients}
\label{sec.application}

In this section, we prove that a certain realization of an elliptic operator with possibly unbounded coefficients is the full generator of a sub Markovian semigroup of kernel operators that enjoys the strong Feller property. Our strategy is similar to that employed in \cite{mpw02}. 
However, making use of our theoretical results, we obtain two major benefits compared to~\cite{mpw02}: 
On the one hand, the proof simplifies significantly as it now suffices to study the resolvent equation. 
On the other hand, we do not need to impose additional regularity assumptions on the coefficients to construct the semigroup.

We will consider differential operators $\cA$ of the form
\begin{equation}
	\label{eq.operator}
	\cA u = \sum_{i,j=1}^da_{ij}\frac{\partial^2}{\partial x_i\partial x_j}u + \sum_{j=1}^d b_j\frac{\partial}{\partial x_j}u.
\end{equation}
Throughout, we make the following assumptions on the coefficients.

\begin{hyp}
	\label{h.1}
	For $i,j=1, \ldots, d$ let $a_{ij}: \R^d \to \R$ be continuous and $b_j : \R^d \to \R$ be measurable functions 
	such that the following properties hold:
	\begin{enumerate}
		\item 
		We have $a_{ij}=a_{ji}$, and 
		\[
			\sum_{i,j=1}^d a_{ij}(x)\xi_i\xi_j \geq \eta(x) |\xi|^2 \quad \text{for all } x,\xi\in\R^d,
		\]
		where $\eta:\R^d\to (0,\infty)$ is a function that satisfies $\inf_K\eta >0$ for all $K\Subset \R^d$.
		
		\item 
		The functions $b_j$ are locally bounded.
	\end{enumerate}
\end{hyp}

To prove that a certain realization of the operator $\cA$ generates a semigroup of kernel operators we use, similar to \cite{mpw02}, an approximation argument. 
To that end, we define the multivalued operator $A_n$ on $B_b(B(0,n))$ by setting
\begin{equation}
	\label{eq.approxop}
	(u, f)\in A_n \Leftrightarrow \Big[ u \in C_0(B(0,n)) \cap \bigcap_{1<p<\infty}W^{2,p}(B(0,n)) \mbox{ and } \cA u = f \mbox{ a.e.\ on } \Omega\Big]
\end{equation}
for $u,f \in B_b(B(0,n))$.
Note that for $u\in \bigcap_{p\in (1,\infty)}W^{2,p}(B(0,n))$ the function $\cA u$ is only well-defined modulo equality almost everywhere. Thus, we can rephrase the above by saying that $(u,f)\in A_n$ if and only if $f$ is a version of $\cA u$. This makes $A_n$ indeed a multivalued operator. Alternatively, we could consider a single-valued operator $\tilde A_n$ on $L^\infty(B(0,n))$ by setting $D(\tilde A_n) = C(\bar{B}(0,n))\cap\bigcap_{p\in (1,\infty)} W^{2,p}{B(0,n)}$ and $\tilde A_n u = [\cA u]$ where the square brackets refer to the equivalence class in $L^\infty(B(0,n))$.

\begin{lem}\label{l.approxgen}
	The operator $A_n$ is the full generator of a sub Markovian semigroup $T_n = (T_n(t))_{t>0}$ of kernel operators on $B_b(B(0,n))$. Moreover, $T_n$ is a $C_b$-Feller semigroup that enjoys the strong Feller property.
\end{lem}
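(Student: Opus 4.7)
The plan is to construct $R_n(\lambda)$ by solving the Dirichlet resolvent equation for $\cA$ on $B(0,n)$, verify that $(R_n(\lambda))_{\lambda > 0}$ is a sub-Markov, strong Feller pseudo-resolvent of kernel operators, generate a $C_0$-semigroup on $C_0(B(0,n))$ via Hille--Yosida, and then extend it to $B_b(B(0,n))$ through the associated transition kernels.

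On $B(0,n)$, Hypothesis~\ref{h.1} guarantees uniform ellipticity (with constant $\inf_{\overline{B(0,n)}}\eta > 0$) and boundedness of all coefficients. Classical $L^p$-theory for equations in non-divergence form then supplies, for every $\lambda > 0$ and $f \in B_b(B(0,n))$, a unique strong solution $u \in W^{1,p}_0(B(0,n)) \cap W^{2,p}(B(0,n))$ of $\lambda u - \cA u = f$ for every $p \in (1, \infty)$; Sobolev embedding places $u$ in $C_0(B(0,n))$. Setting $R_n(\lambda)f := u$, the Aleksandrov--Bakelman--Pucci maximum principle yields $R_n(\lambda) \ge 0$ and $\|\lambda R_n(\lambda)\| \le 1$, and uniqueness of solutions gives the resolvent identity. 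A uniform $L^p$-estimate combined with the Rellich--Kondrachov theorem shows that $f_k \weak f$ in $B_b$ forces $R_n(\lambda)f_k \to R_n(\lambda)f$ uniformly on $\overline{B(0,n)}$, so $R_n(\lambda) \in \cL(B_b(B(0,n)), \sigma)$ and is therefore a kernel operator by \cite[Prop.~3.5]{k11}. Since its range lies in $C_0(B(0,n)) \subseteq C_b(B(0,n))$, $R_n(\lambda)$ is strong Feller.

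Now let $B_n := A_n \cap (C_0(B(0,n)) \times C_0(B(0,n)))$ be the part of $A_n$ in $C_0(B(0,n))$, so that its resolvent is $R_n(\lambda)|_{C_0}$, a sub-Markov contractive pseudo-resolvent. For each $\varphi \in C^2_c(B(0,n))$ the pair $(\varphi, \cA\varphi)$ lies in $A_n$ with $\cA\varphi$ bounded and measurable, so $\varphi = R_n(\lambda)(\lambda\varphi - \cA\varphi)$ and the contractivity of $R_n(\lambda)$ gives $\|\lambda R_n(\lambda)\varphi - \varphi\|_\infty = \|R_n(\lambda)\cA\varphi\|_\infty \le \|\cA\varphi\|_\infty / \lambda \to 0$ as $\lambda \to \infty$. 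Since $C^2_c(B(0,n))$ is dense in $C_0(B(0,n))$, $D(B_n) = R_n(\lambda)(C_0(B(0,n)))$ is dense in $C_0(B(0,n))$, and Hille--Yosida delivers a sub-Markov contractive $C_0$-semigroup $(S_n(t))_{t \ge 0}$ on $C_0(B(0,n))$ generated by $B_n$. Riesz representation of the positive functionals $f \mapsto (S_n(t)f)(x)$ on $C_0(B(0,n))$ yields sub-probability kernels $k_n^{(t)}(x, \cdot)$; the operators $T_n(t)f(x) := \int f(y) \, k_n^{(t)}(x, dy)$ on $B_b(B(0,n))$ extend $S_n(t)$ and constitute a semigroup of sub-Markov kernel operators whose Laplace transform coincides with $R_n(\lambda)$ on $C_0$, hence on $B_b$ by $\sigma$-density of $C_0$ in $B_b$. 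Thus $A_n$ is the full generator of $T_n$. Finally, parabolic $L^p$-regularity for $\cA$ with continuous diffusion and bounded measurable drift (Krylov--Safonov) produces, for each $t > 0$, a transition density of $T_n(t)$ that is bounded and continuous in the spatial variable; this yields the strong Feller property of every $T_n(t)$, and the $C_b$-Feller property then follows from $T_n(t)C_b \subseteq C_b$ together with the stochastic continuity inherited from $S_n$ on $C_0$.

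\textbf{Main obstacle.} The most delicate point is the strong Feller property of each $T_n(t)$ (rather than only that of $R_n(\lambda)$), which requires parabolic regularity in non-divergence form under only measurable drift, so that the kernel $k_n^{(t)}(x, \cdot)$ depends continuously on $x$ in a sufficiently strong sense; the resolvent-level regularity is not enough by itself.
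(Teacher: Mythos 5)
Your construction takes a genuinely different route from the paper's. The paper disposes of the lemma almost entirely by citation: generation is quoted from \cite[Cor.~3.1.21]{l13} (together with \cite{as14} and \cite[Thm.~3.5]{akk16}) for the single-valued operator $\tilde A_n$ on $L^\infty(B(0,n))$, which is then pulled back to $B_b(B(0,n))$ via the quotient map $qf=[f]$; the kernel and strong Feller properties are taken from \cite[Prop.~5.7]{akk16}, and stochastic continuity from strong continuity on $C_0(B(0,n))$ combined with \cite[Lem.~3.1]{sch98}. Your bottom-up construction --- $W^{2,p}$-solvability of the resolvent equation, ABP for positivity and sub-Markovianity, $\sigma$-continuity of $R_n(\lambda)$ via the a priori estimate and compact embedding, density of the domain through $C^2_c$ test functions, Hille--Yosida on $C_0(B(0,n))$, and kernel extension by Riesz representation --- is sound in all of these steps (the density argument correctly circumvents the fact that $\cA\varphi$ need not be continuous for measurable $b_j$, and stochastic continuity on $C_b$ does follow from strong continuity on $C_0$ plus sub-Markovianity, which is exactly \cite[Lem.~3.1]{sch98}). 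It has the added merit of making explicit why $R_n(\lambda)$ is a strong Feller kernel operator, a fact the paper needs again in the proof of Theorem~\ref{t.ellipticmain}.

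The one genuine gap is precisely the point you flag: the strong Feller property of $T_n(t)$ itself. The appeal to ``parabolic $L^p$-regularity / Krylov--Safonov producing a spatially continuous transition density'' is not a proof, and it hides a circularity: to apply interior parabolic estimates to the orbit $u(t,x)=T_n(t)f(x)$ for merely bounded measurable $f$, you must already know that this orbit is a solution of $\partial_t u=\cA u$ with some a priori regularity, which the kernel extension alone does not provide; moreover, the existence of transition densities for non-divergence operators with only continuous $a_{ij}$ and measurable $b_j$ is itself a substantial input. The efficient way to close the gap --- and the one underlying the paper's citation of \cite[Prop.~5.7]{akk16} --- is analyticity: the semigroup generated by $\tilde A_n$ on $L^\infty(B(0,n))$ is analytic (Stewart/Lunardi for continuous leading coefficients, extended to measurable drift in \cite{as14}), so $\tilde T_n(t)$ maps $L^\infty(B(0,n))$ into $D(\tilde A_n)$, whose elements have canonical continuous representatives given by $R_n(\lambda)(\lambda u-g)$ with $(u,g)\in A_n$; combined with your already-established strong Feller property of $R_n(\lambda)$ this yields $T_n(t)B_b(B(0,n))\subseteq C_0(B(0,n))$. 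You should either run this analyticity argument or cite \cite[Prop.~5.7]{akk16} directly; as written, the decisive step of the lemma is asserted rather than proved.
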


\begin{proof}
	That $A_n$ is a generator follows from \cite[Cor.\ 3.1.21]{l13} (cf.\ also \cite{as14} and \cite[Thm.\ 3.5]{akk16}). Note that, actually,  in these references generation of a semigroup $\tilde T_n$ on the space $L^\infty(B(0,n))$ by the operator $\tilde A_n$ introduced above was established. 
	However, if $q : B_b(B(0,n)) \to L^\infty(B(0,n))$ is defined by $q f =[f]$, it is easy to see that $T_n = \tilde T_n \circ q$ is a semigroup on $B_b(B(0,n))$ with full generator $A_n = \{ (u,f) : u \in D(\tilde A), q f =\tilde A u \}$. That this semigroup $T_n$ consists of kernel operators which enjoy the strong Feller property follows from \cite[Prop.\ 5.7]{akk16} with $\mu\equiv 0$. As for the stochastic continuity, first note that in view of \cite[Prop.\ 2.1.4]{l13}, it is a further consequence of \cite[Cor.\ 3.1.21]{l13} that $T_n(t)f \to f$ with respect to $\|\cdot\|_\infty$ whenever $f\in C_0(B(0,n))$. By \cite[Lem.\ 3.1]{sch98}, it follows that $T_n$ is stochastically continuous.
\end{proof}

We now turn to the elliptic operator on all of $\R^d$. We set
\[
	D_{\mathrm{max}}(A) \coloneqq \Big\{ u\in C_b(\R^d) \cap \bigcap_{1<p<\infty} W^{2,p}_{\mathrm{loc}}(\R^d) : \exists f\in B_b(\R^d) \mbox{ s.t. } \cA u = f \mbox{ a.e.}\Big\}.
\]

\begin{thm}
	\label{t.ellipticmain}
	There exists a subspace $\hat D \subset D_{\mathrm{max}} (A)$ such that the operator $\hat A \coloneqq \{ (u, f) \in \hat D\times B_b(\R^d) : \cA u = f \mbox{ a.e.}\}$ is the full generator of a sub Markovian semigroup $T=(T(t))_{t>0}$ of kernel operators. 
	This semigroup is a $C_b$-Feller semigroup and enjoys the strong Feller property. 
\end{thm}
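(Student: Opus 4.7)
The plan is to apply Theorems~\ref{t.limit-semigroup} and~\ref{t.strongfeller} together with Corollary~\ref{c.cz1} to the approximating semigroups $T_n$ of Lemma~\ref{l.approxgen}, thereby transferring the burden of regularity from the semigroup (as in~\cite{mpw02}) to the resolvent. First I extend each $T_n$ trivially to $B_b(\R^d)$ by setting $\widetilde T_n(t)f$ equal to $T_n(t)(f|_{B(0,n)})$ inside $B(0,n)$ and $0$ outside; since $T_n(t)B_b(B(0,n))\subseteq C_0(B(0,n))$, the extensions remain sub Markovian kernel operators that enjoy the strong Feller property, and their pseudo-resolvents $\widetilde R_n(\lambda)$ extend analogously. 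Domain monotonicity of Dirichlet resolvents (a consequence of the elliptic maximum principle applied on the nested balls $B(0,n)\subset B(0,n+1)$) gives $\widetilde R_n(\lambda)\le \widetilde R_{n+1}(\lambda)$ for every $\lambda>0$, so Theorem~\ref{t.limit-semigroup} produces a sub Markovian limit semigroup $T$ of kernel operators on $B_b(\R^d)$ with pseudo-resolvent $R(\lambda)f = \sigma\text{-}\lim_n \widetilde R_n(\lambda)f$.

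The central step is to verify Hypothesis~\ref{h.sf}(B). Fix $\lambda>0$ and set $v_n \coloneqq \widetilde R_n(\lambda)\one$. Each $v_n$ is continuous on $\R^d$, bounded by $\lambda^{-1}$, and satisfies $\lambda v_n - \cA v_n = \one$ almost everywhere on $B(0,n)$. Hypothesis~\ref{h.1} provides local uniform ellipticity and local boundedness of the coefficients, so the interior $L^p$ theory for elliptic operators (Calder\'on--Zygmund estimates) furnishes, for every ball $B(0,R)$ and every $p\in(1,\infty)$, a bound on $\|v_n\|_{W^{2,p}(B(0,R))}$ that is uniform in $n$ (for $n$ large). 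Combining a diagonal extraction with Sobolev embedding ($W^{2,p}\hookrightarrow C^0$ for $p>d/2$) then shows that $v\coloneqq R(\lambda)\one$ lies in $C_b(\R^d)\cap \bigcap_{1<p<\infty}W^{2,p}_{\loc}(\R^d)$ and solves $\lambda v - \cA v = \one$ almost everywhere. In particular $R(\lambda)\one\in C_b(\R^d)$, so Theorem~\ref{t.strongfeller} yields the strong Feller property of $T$.

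To obtain the $C_b$-Feller property I verify Hypothesis~\ref{h.exhaust} with $K_n\coloneqq \overline{B(0,n-1)}$: the stochastic continuity of $T_n$ established inside the proof of Lemma~\ref{l.approxgen} gives $T_n(t)f\to f$ uniformly as $t\to 0$ for every $f\in C_0(B(0,n))$, and choosing such an $f$ with $0\le f\le \one$ and $f\equiv 1$ on $K_n$ yields $T_n(t)\one(x)\to 1$ for all $x\in K_n$. Corollary~\ref{c.cz1} then gives the $C_b$-Feller property. Finally, to identify the full generator, I take $\hat D\coloneqq R(\lambda)B_b(\R^d)$, which is independent of $\lambda$ by the resolvent identity. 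If $(u,f)$ belongs to the full generator of $T$, then $u=R(\lambda)g$ with $g\coloneqq \lambda u - f$, and repeating the compactness/regularity argument of the second paragraph (with $\one$ replaced by $g$) shows that $u\in C_b(\R^d)\cap \bigcap_{1<p<\infty}W^{2,p}_{\loc}(\R^d)$ and $\cA u = \lambda u - g = f$ almost everywhere, hence $(u,f)\in \hat A$. The converse inclusion follows because $u=R(\lambda)h\in \hat D$ together with $\cA u = f$ a.e.\ forces $h=\lambda u - f$ a.e., and hence $u=R(\lambda)(\lambda u - f)$, placing $(u,f)$ in the full generator. The principal obstacle is the uniform interior $W^{2,p}$ control in the second paragraph; once this is in hand, everything else reduces to bookkeeping with the abstract theorems of Sections~\ref{sec.a-monotone-convergence-theorem} and~\ref{sec.strong-feller}.
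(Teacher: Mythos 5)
Your proposal is correct and follows essentially the same route as the paper: trivial extension of the Dirichlet approximants, monotonicity via the maximum principle, Theorem~\ref{t.limit-semigroup} for the limit semigroup, an interior elliptic estimate plus compactness to verify Hypothesis~\ref{h.sf}(B), Corollary~\ref{c.cz1} for the $C_b$-Feller property, and identification of the generator through the regularity of $R(\lambda)B_b(\R^d)$. The only (cosmetic) difference is that the paper invokes a uniform interior $C^1$ bound from \cite[Lem.~3.4]{akk16} together with Arzel\`a--Ascoli where you use interior Calder\'on--Zygmund $W^{2,p}$ estimates and Sobolev embedding; both yield the needed local compactness and, in your version, also the $W^{2,p}_{\loc}$ regularity used again in the generator identification.
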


\begin{proof}
	Given $n\in \N$, let $A_n$ be as in Lemma \ref{l.approxgen} and  consider the resolvent $R_n(\lambda)\coloneqq R(\lambda, A_n)$ and the semigroup $T_n$,  
	generated by $A_n$ as operators on all of $\R^d$, extending functions by zero outside $B(0,n)$.
	Note that in view of the imposed Dirichlet boundary conditions, this results in strong Feller operators.

	Making use of the maximum principle, see \cite[Lem.\ 3.2]{as14} for a version suitable for our setting, 
	it is easy to see that for $0 \le f\in C_b(\R^d)$ and $\lambda >0$ the sequence $(R_n(\lambda)f)$ is monotonically increasing. 
	This implies that the same monotonicity remains true when the operators act on $B_b(\R^d)$. 
	Now Theorem~\ref{t.limit-semigroup} shows that the semigroups $T_n$ are monotonically increasing to a semigroup $T$ whose Laplace transform is $R(\lambda) \coloneqq \sup_n R_n(\lambda)$. 
	
	To employ Theorem \ref{t.strongfeller}, we still need to verify Assumption (B) in Hypothesis \ref{h.sf}. To that end, put $u=R(\lambda)\one$ and $u_n= R_n(\lambda)\one$. By construction, $u_n$ converges pointwise to $u$. Fix a bounded open set $U\subset \R^d$ and pick $n_0 \in \N$ so large, that $\bar U \subset B(0, n_0)$. By \cite[Lem.\ 3.4]{akk16}, there is a constant $C=C(U, n_0)$ such that
	\[
	\|u_n\|_{C^1(U)} \leq C \|\one\|_{L^\infty(B(0, n_0))}
	\]
for all $n\geq n_0$. By the Arzel\`a--Ascoli Theorem, $u_n$ has a subsequence that converges, uniformly on $\bar U$, to a continuous function. This proves that $u$ is continuous on $\bar U$ and thus, as we may exhaust $\R^d$ with bounded open sets, continuous on all of
$\R^d$. Now Theorem~\ref{t.strongfeller} yields  the strong Feller property for $T$. 
As Hypothsis \ref{h.exhaust} is satisfied, Corollary \ref{c.cz1} shows that $T$ is a $C_b$-Feller semigroup.

	It remains to identify the generator of $T$:
	repeating the arguments in the proof of \cite[Thm.\ 3.4]{mpw02}, we see that for $0 \le f \in B_b(\R^d)$ the sequence $u_n \coloneqq R_n(\lambda)f$ converges, uniformly on compact sets, to a function $u\in D_\mathrm{max}(A)$ that solves the elliptic equation $\lambda u - \cA u = f$.
	As $R(\lambda)$ is a pseudoresolvent, it follows that  there exists an operator $\hat A$ as claimed.
\end{proof}

\begin{rem}
	One might ask for the long-term behaviour of the semigroup constructed in Theorem~\ref{t.ellipticmain}.
	In \cite[Sect.\ 7.1]{ggk22} this was studied under the stronger regularity assumptions on the coefficients made in \cite{mpw02}.
	Now that the semigroup can also be obtained under weaker assumptions, similar methods as in \cite[Sect.\ 7.1]{ggk22} can be applied to this semigroup, too.
\end{rem}

\subsection*{Acknowledgements}

We are indebted to René Schilling for a useful discussion concerning Theorem~3.2 in his article \cite{sch98}.

C.B.\ acknowledges funding by the Deutsche Forschungsgemeinschaft (DFG, German Research Foundation) -- 468736785.

\end{document}